\documentclass[a4paper,11pt]{amsart}
\usepackage{amsmath,amsfonts,amsthm,amssymb,enumerate,ifpdf}
\usepackage[pdftex]{graphicx}

\numberwithin{equation}{section}
\numberwithin{figure}{section}

\newtheorem{theorem}{Theorem}[section]
\newtheorem{corollary}[theorem]{Corollary}
\newtheorem{proposition}[theorem]{Proposition}

\newtheorem{lemma}[theorem]{Lemma}
\newtheorem{remark}[theorem]{Remark}

{
\theoremstyle{definition}
\newtheorem{definition}[theorem]{Definition}

}

\newenvironment{rem}{\begin{remark}\rm}{\end{remark}}

\title{Satellites of an oriented surface link and their local moves}
\author{Inasa Nakamura}
\address{
Institute for Biology and Mathematics of Dynamical Cell Processes (iBMath), Interdisciplinary Center for Mathematical Sciences, Graduate School of Mathematical Sciences, The University of Tokyo\newline
3-8-1 Komaba, Tokyo 153-8914, Japan 
}
 
\email{inasa@ms.u-tokyo.ac.jp}

\subjclass[2010]{Primary 57Q45; Secondary 57Q35} 

\keywords{surface link; 2-dimensional braid; Roseman move; chart}

\pagestyle{plain}

\begin{document}

\begin{abstract}
For an oriented surface link $F$ in $\mathbb{R}^4$, 
we consider a satellite construction of a surface link, called a 2-dimensional braid over $F$, which is in the form of a covering over $F$. We introduce the notion of an $m$-chart on a surface diagram $\pi(F)\subset \mathbb{R}^3$ of $F$, which is a finite graph on $\pi(F)$ satisfying certain conditions and is an extended notion of an $m$-chart on a 2-disk presenting a surface braid. 
A 2-dimensional braid over $F$ is presented by an $m$-chart on $\pi(F)$. 
It is known that two surface links are equivalent if and only if their surface diagrams are related by a finite sequence of ambient isotopies of $\mathbb{R}^3$ and local moves called Roseman moves. 
We show that Roseman moves for surface diagrams with $m$-charts can be well-defined.
\end{abstract}
 
 \maketitle

\section{Introduction}
A {\it surface link} is the image of an embedding of a closed surface into the Euclidean 4-space $\mathbb{R}^4$. 
Two surface links are {\it equivalent} if one is carried to the other by an ambient isotopy of $\mathbb{R}^4$. 
A {\it surface diagram} of a surface link is its projected generic image in $\mathbb{R}^3$ equipped with over/under information along each double point curve. It is known \cite{Roseman} that two surface links are equivalent if and only if their surface diagrams are related by a finite sequence of local moves called Roseman moves as illustrated in Fig. \ref{fig:0417-03}, and ambient isotopies of the diagrams in $\mathbb{R}^3$.

\begin{figure}
\centering
\includegraphics*{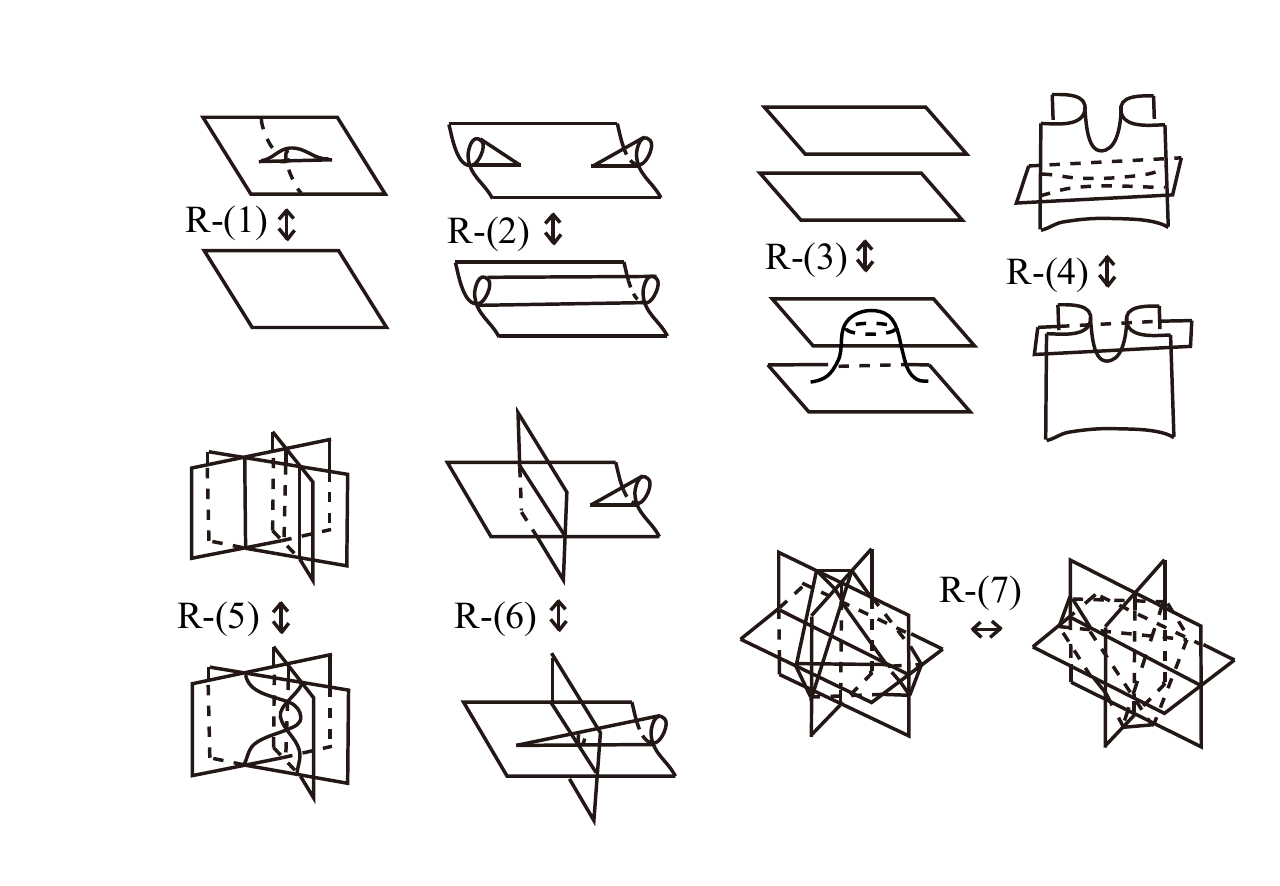}
\caption{Roseman moves. For simplicity, we omit the over/under information of each sheet. }
\label{fig:0417-03}
 \end{figure}

In this paper, we assume that surface links are oriented.  
We consider a satellite construction of a surface link $F$, which is in the form of a covering over $F$. In order to describe this construction, we introduce the notion of a 2-dimensional braid over $F$, which is an extended notion of a 2-dimensional braid over a 2-disk. 
A 2-dimensional braid over a 2-disk or a closed surface has a graphical presentation called an $m$-chart. 
We extend this notion to the notion of an $m$-chart on a surface diagram, and we show that a 2-dimensional braid over $F$ is presented by an $m$-chart on its surface diagram (Theorem \ref{thm:5-6}). 
Our main aim is to show that Roseman moves for surface diagrams with $m$-charts can be well-defined, by adding several new local moves to the original Roseman moves (Theorem \ref{prop:7-1}). 

The paper is organized as follows. 
In Section \ref{sec2}, we give the definition of 2-dimensional braids over a surface link. 
In Section \ref{sec3}, we review surface diagrams. 
In Section \ref{sec4}, we review chart description of 2-dimensional braids over a closed surface. 
In Section \ref{sec5}, we introduce the notion of an $m$-chart on a surface diagram and its presenting 2-dimensional braid, and we show Theorem \ref{thm:5-6}.
In Section \ref{sec6}, we introduce several new moves of Roseman moves for surface diagrams with $m$-charts. 
In Section \ref{sec7}, we give several remarks. 
Section \ref{sec8} is devoted to showing Theorem \ref{prop:7-1}.

\section{Two-dimensional braids over a surface link}\label{sec2}
A 2-dimensional braid, which is also called a simple braided surface, over a 2-disk, is an analogous notion of a classical braid \cite{Kamada92,Kamada02,Rudolph}. 
We can modify this notion to a 2-dimensional braid over a closed surface \cite{N}. 
For a surface link $F$, we consider a closed surface embedded into $\mathbb{R}^4$ preserving the braiding structure as a satellite with companion $F$; see \cite[Section 2.4.2]{CKS}. We will call this a 2-dimensional braid over $F$.

Let $\Sigma$ be a closed surface, let $B^2$ be a 2-disk, and let $m$ be a positive integer.
\begin{definition}
A closed surface $S$ embedded in $B^2 \times \Sigma$ is called a {\it 2-dimensional braid over $\Sigma$} of degree $m$ if it satisfies the following conditions:
\begin{enumerate}[(1)]
\item
The restriction $p|_{S} \,:\, S \rightarrow \Sigma$ is a branched covering map of degree $m$, where $p\,:\, B^2 \times \Sigma \to \Sigma$ is the projection to the second factor. 
\item
For each $x \in \Sigma$, $\#(S \cap p^{-1}(x))=m-1$ or $m$.
\end{enumerate} 

Take a base point $x_0$ of $\Sigma$. 
Two 2-dimensional braids over $\Sigma$ of degree $m$ are {\it equivalent} if there is a fiber-preserving ambient isotopy of $B^2 \times \Sigma$ rel $p^{-1}(x_0)$ which carries one to the other. 

A {\it trivial} 2-dimensional braid is the product of $m$ distinct interior points of $B^2$ and $\Sigma$ in $B^2 \times \Sigma$.
\end{definition}

For a closed surface $\Sigma$, a surface link is said to be {\it of type $\Sigma$} when it is the image of an embedding of $\Sigma$. 
Let $F$ be a surface link of type $\Sigma$, and let $N(F)$ be a tubular neighborhood of $F$ in $\mathbb{R}^4$. 

\begin{definition}\label{def:2-braid}
Let $S$ be a 2-dimensional braid over $\Sigma$ in $B^2 \times \Sigma$. 
Let $f: B^2 \times \Sigma \to \mathbb{R}^4$ be an embedding so that $f(B^2 \times \Sigma)=N(F)$. 
Then we call the image $f(S)$ a {\it 2-dimensional braid over $F$}. We call $F$, $S$, and $f$ the {\it companion}, {\it pattern}, and {\it associated embedding} of the 2-dimensional braid, respectively.
We define the {\it degree} of $f(S)$ as that of $S$.

Let $\tilde{S}$ (resp. $\tilde{S}^\prime$) be a 2-dimensional braid of degree $m$ over a surface link $F$ (resp. $F^\prime$) of type $\Sigma$ with pattern $S$ (resp. $S^\prime$) and the  associated embedding $f$ (resp. $f^\prime$). 
Two 2-dimensional braids $\tilde{S}$ and $\tilde{S}^\prime$ are {\it equivalent} if there is an ambient isotopy $\{g_u\}_{u \in [0,1]}$ of $\mathbb{R}^4$ which carries one to the other such that $g_1 \circ f=f^\prime$ and $S$ and $S^\prime$ are equivalent.

When $S$ is trivial, we call $f(S)$ a {\it trivial} 2-dimensional braid. 
\end{definition}
Note that if, for surface links $F$ and $F^\prime$, two 2-dimensional braids over them are equivalent, then $F$ and $F^\prime$ are equivalent surface links. Further, equivalent 2-dimensional braids are also equivalent surface links. Hence, the following proposition follows.
\begin{proposition}
Let $f(S)$ and $f(S^\prime)$ be 2-dimensional braids over a surface link $F$ with the same associated embedding $f:B^2 \times \Sigma \to N(F)$ and with pattern $S$ and $S^\prime$, respectively. 
If $S$ and $S^\prime$ are equivalent, 
then $f(S)$ and $f(S^\prime)$ are equivalent both as 2-dimensional braids and as surface links.
\end{proposition}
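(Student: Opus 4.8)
The plan is to transport a fiber‑preserving ambient isotopy of $B^2\times\Sigma$ realizing the equivalence $S\simeq S'$ to a tubular neighborhood of $F$ by means of the associated embedding $f$, and then to extend it by the identity to an ambient isotopy of $\mathbb{R}^4$. Once that is in hand, the surface‑link equivalence of $f(S)$ and $f(S')$ is automatic — it is noted just before the statement that equivalent $2$‑dimensional braids are equivalent surface links — so the substance is to produce the equivalence of $f(S)$ and $f(S')$ as $2$‑dimensional braids over $F$.

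Concretely, I would first invoke the hypothesis: since $S$ and $S'$ are equivalent $2$‑dimensional braids over $\Sigma$, there is a fiber‑preserving ambient isotopy $\{h_u\}_{u\in[0,1]}$ of $B^2\times\Sigma$, rel $p^{-1}(x_0)$, with $h_0=\mathrm{id}$ and $h_1(S)=S'$. Because $S$, $S'$, and every intermediate braid $h_u(S)$ lie in $\mathrm{int}(B^2)\times\Sigma$, a standard support‑shrinking argument (via the isotopy extension theorem) lets me assume in addition that $\{h_u\}$ is the identity on a neighborhood of $\partial B^2\times\Sigma$. I would then define $g_u:=f\circ h_u\circ f^{-1}$ on $N(F)=f(B^2\times\Sigma)$ and $g_u:=\mathrm{id}$ on $\mathbb{R}^4\setminus\mathrm{int}\,N(F)$; since the two formulas agree on a neighborhood of $\partial N(F)$, the pasting lemma makes $\{g_u\}_{u\in[0,1]}$ an ambient isotopy of $\mathbb{R}^4$ with $g_0=\mathrm{id}$. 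This $\{g_u\}$ carries $f(S)$ to $f(h_1(S))=f(S')$, it is the identity outside $N(F)$, and it maps $N(F)$ onto itself for every $u$; together with the hypothesis $S\simeq S'$, this is the situation of Definition~\ref{def:2-braid}, so $f(S)$ and $f(S')$ are equivalent as $2$‑dimensional braids over $F$, and hence also as surface links.

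The one step that is more than a routine verification is the reduction to an ambient isotopy of $B^2\times\Sigma$ that is the identity near the boundary: this is precisely where the defining property that a $2$‑dimensional braid sits in the interior of $B^2\times\Sigma$ is used, and it is what allows the transported family to be glued to the identity across $\partial N(F)$ and so to extend over all of $\mathbb{R}^4$. Everything else — that the pasted family $\{g_u\}$ is level‑preserving, continuous, and a homeomorphism for each $u$ — is immediate from the pasting lemma.
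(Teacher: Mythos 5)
Your proposal is correct, but it takes a more explicit route than the paper, which offers no proof at all: the statement is derived in one line from the sentence immediately preceding it (``equivalent 2-dimensional braids are also equivalent surface links'') together with the observation that, since $f=f'$ and $S\simeq S'$, the conditions of Definition~\ref{def:2-braid} are met essentially tautologically (one may take $g_u=\mathrm{id}$, so that $g_1\circ f=f=f'$ and the pattern equivalence is the hypothesis). What you do instead is construct an honest ambient isotopy of $\mathbb{R}^4$ carrying $f(S)$ to $f(S')$ by conjugating the fiber-preserving isotopy $\{h_u\}$ through $f$, shrinking its support off $\partial B^2\times\Sigma$, and extending by the identity. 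This is more work, but it is valuable work: it is precisely the argument that justifies the paper's unproved remark that equivalent 2-dimensional braids are ambient isotopic as surface links, so your write-up makes explicit what the paper leaves implicit. The one point of friction is with the letter of Definition~\ref{def:2-braid}: your $g_1$ satisfies $g_1\circ f=f\circ \tilde h_1$, not $g_1\circ f=f$, so strictly speaking your $\{g_u\}$ does not witness the braid equivalence as defined (whereas $g_u=\mathrm{id}$ does, at the cost of not literally ``carrying one to the other''~--- the definition is over-determined as written and cannot be satisfied verbatim unless $S=S'$). The clean resolution is to use $g_u=\mathrm{id}$ for the 2-dimensional-braid equivalence, which is immediate, and to reserve your constructed isotopy for the surface-link equivalence; alternatively, observe that $f\circ \tilde h_1$ and $f$ determine the same framing of $N(F)$ since $h_u$ is fiber-preserving. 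With that adjustment your argument is complete.
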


From now on throughout the paper, when we say that \lq\lq 2-dimensional braids are equivalent", we mean in the sense of Definition \ref{def:2-braid} except when explicitly indicated otherwise.  

A 2-dimensional braid $f(S)$ over $F$ is a specific satellite with pattern $S$ and companion $F$; see \cite[Section 2.4.2]{CKS}, see also \cite[Chapter 1]{Lickorish}. Satellite constructions have been used for constructing knotted surfaces 
with desired properties for the fundamental groups $G$ of the complement; see Litherland \cite{Lith81} for knotted surfaces with non-trivial $H_2(G)$, and Kanenobu \cite{Kane83} for knotted 2-spheres such that each group $G$ is prime and has deficiency $-n$ for a given positive integer $n$. A 2-dimensional braid over a surface link is an extended notion of cabling for knotted surfaces; for the study on cabling, see Kanenobu \cite{Kane85} (resp. Iwase \cite{Iwa88, Iwa90}) for the case when the companion is a knotted sphere (resp. a standard torus), and Hirose \cite{Hiro03} for higher genus cases. In \cite{N, N2}, we studied surface links  which are in the form of 2-dimensional braids over a standard torus; see also our remark in Section \ref{sec7-2}. Kamada \cite{Kama90a, Kama90b} studied doubled surfaces, which are satellites whose companion is a nonorientable surface.

\section{Review of surface diagrams}\label{sec3}
We will review a surface diagram of a surface link $F$; see \cite{CKS}. For a projection $\pi \,:\, \mathbb{R}^4 \to \mathbb{R}^3$, the closure of the self-intersection set of $\pi(F)$ is called the singularity set. Let $\pi$ be a generic projection, i.e. the singularity set of the image $\pi(F)$ consists of double points, isolated triple points, and isolated branch points; see Fig. \ref{0215-1}. The closure of the singularity set forms a union of immersed arcs and loops, which we call double point curves. Triple points (resp. branch points) form the intersection points (resp. the end points) of the double point curves. A {\it surface diagram} of $F$ is the image $\pi(F)$ equipped with over/under information along each double point curve with respect to the projection direction. 
We denote the surface diagram by the same notation $\pi(F)$.
A surface diagram $\pi^\prime(S)$ of a 2-dimensional braid $S$ over $\Sigma$ in $I \times I \times \Sigma$ for an interval $I$ is defined likewise, where $\pi^\prime: I \times I \times \Sigma \to I \times \Sigma$ is a generic projection.

\begin{figure}
\begin{center}
 \includegraphics*{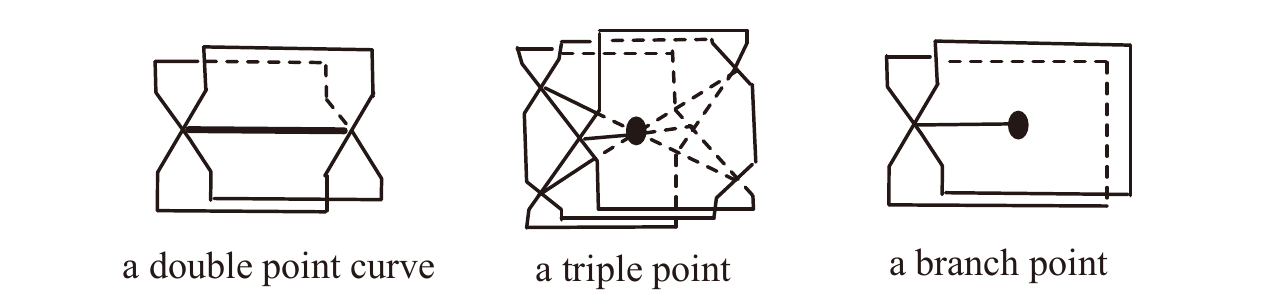}
\end{center}
  \caption{The singularity of a surface diagram.}
  \label{0215-1}
  \end{figure}

Let $S$ be a 2-dimensional braid over a surface link $F$. 
Throughout this paper, we consider the surface diagram of $S$ by a projection $\pi:\mathbb{R}^4 \to \mathbb{R}^3$ which is generic with respect to the companion $F$. Perturbing $S$ if necessary, we can assume that $\pi$ is also generic with respect to $S$. 
In particular, if the surface diagram $\pi(F)$ has no singularities, then by an ambient isotopy of $\mathbb{R}^4$ satisfying the conditions of equivalence of $S$, we can assume that $F$ is standard, i.e. $F$ is the boundary of a handlebody in $\mathbb{R}^3 \times \{0\}$. Then we assume that $\pi$ projects $N(F)=I \times I \times F$ to $I \times F$, where we identify $N(F)$ with $I \times I \times F$ in such a way as follows. Since $F$ is the boundary of a handlebody in $\mathbb{R}^3 \times \{0\}$, the normal bundle of $F$ in $\mathbb{R}^3 \times \{0\}$ is a trivial bundle. We identify it with $I \times F$. Then we identify $N(F)$ with $I \times I \times F$, where the second $I$ is an interval in the fourth axis of $\mathbb{R}^4$. In this case, we can identify $\pi^\prime(S)$ with $\pi(S)$. 

\section{Review of chart description of 2-dimensional braids over $\Sigma$} \label{sec4}
 The graphical method called an $m$-chart on a 2-disk was introduced to present a simple surface braid which is a 2-dimensional braid over a 2-disk with trivial boundary condition \cite{Kamada92, Kamada02}. By regarding an $m$-chart on a 2-disk as drawn on a 2-sphere $S^2$, it presents a 2-dimensional braid over $S^2$ \cite{Kamada92, Kamada02, N}. 
These notions can be modified for $m$-charts on $\Sigma$ \cite{N}; see also \cite{Kamada02}. An $m$-chart on $\Sigma$ presents a 2-dimensional braid over $\Sigma$ \cite{N}. 
\\
 
We identify $B^2$ with $I \times I$. 
When a 2-dimensional braid $S$ over $\Sigma$ is given, we obtain a graph on $\Sigma$, as follows. 
We can assume that the projection $\pi^\prime: I \times I \times \Sigma \to I \times \Sigma$ is generic with respect to $S$. 
Then the singularity set $\mathrm{Sing}(\pi^\prime(S))$ of $\pi^\prime(S)$ consists of double point curves, triple points, and branch points. Moreover we can assume that the singular set of the image of $\mathrm{Sing}(\pi^\prime(S))$ by the projection to $\Sigma$ consists of a finite number of double points such that the preimages belong to double point curves of $\mathrm{Sing}(\pi^\prime(S))$. 
Thus the image of $\mathrm{Sing}(\pi^\prime(S))$ by the projection to $\Sigma$ forms a finite graph $\Gamma$ on $\Sigma$ such that the degree of its vertex is either $1$, $4$ or $6$. 
An edge of $\Gamma$ corresponds to a double point curve, and a vertex of degree $1$ (resp. $6$) corresponds to a branch point (resp. a triple point). 

For such a graph $\Gamma$ obtained from a 2-dimensional braid $S$, we give orientations and labels to the edges of $\Gamma$, as follows. 
Let us consider a path $l$ in $\Sigma$ such that $\Gamma \cap l$ is a point $P$ of an edge $e$ of $\Gamma$. 
Then $S \cap \pi^{\prime -1} (l)$ is a classical $m$-braid with one crossing in $\pi^{\prime -1}(l)$ such that $P$ corresponds to the crossing of the $m$-braid. Let $\sigma_{i}^{\epsilon}$ ($i \in \{1,\ldots, m-1\}$, $\epsilon \in \{+1, -1\}$) be the presentation of $S \cap \pi^{\prime -1}(l)$. 
Then label the edge $e$ by $i$, and moreover give $e$ an orientation such that the normal vector of $l$ coincides (resp. does not coincide) with the orientation of $e$ if $\epsilon=+1$ (resp. $-1$). We call such an oriented and labeled graph an {\it $m$-chart of $S$}. 
\\
 
 In general, we define an $m$-chart on $\Sigma$ as follows. 

\begin{definition}\label{def:chart}
A finite graph $\Gamma$ on $\Sigma$ is called an {\it m-chart on $\Sigma$} if it satisfies the following conditions: 

\begin{enumerate}[(1)]
 \item Every edge is oriented and labeled by an element of $\{1,2, \ldots, m-1\}$. 
 \item Every vertex has degree $1$, $4$, or $6$.
 \item  The adjacent edges around each vertex are oriented and labeled as shown in Fig. \ref{Fig1-1}, where we depict a vertex of degree 1 (resp. 6) by a black vertex (resp. a white vertex).
 \end{enumerate}

An {\it empty $m$-chart} is an $m$-chart presented by an empty graph.
 \end{definition}
 
 \begin{figure}
 \includegraphics*{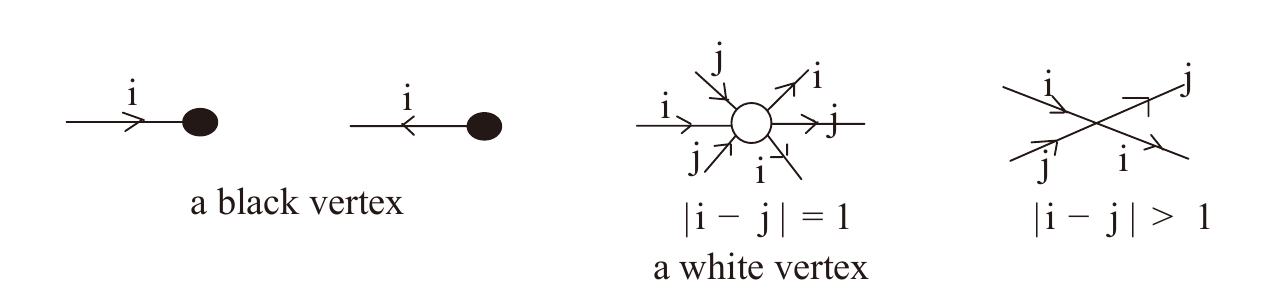}
 \caption{Vertices in an $m$-chart.}
 \label{Fig1-1}
 \end{figure}
  
 When an $m$-chart $\Gamma$ on $\Sigma$ is given, we can reconstruct a 2-dimensional braid $S$ over $\Sigma$ such that the original $m$-chart $\Gamma$ is an $m$-chart of $\Sigma$; see \cite{Kamada92-2, Kamada02, N}. 
A black vertex (resp. a white vertex) of $\Gamma$ presents a branch point (resp. a triple point) of $S$. 
\\

For a surface link $F$ whose surface diagram $D$ has no singularities, a 2-dimensional braid over $F$ is presented by an $m$-chart on $D$ (see Section \ref{sec3}). 

In order to define an $m$-chart on a surface diagram with singularities, we will define an $m$-chart $\Gamma$ over a compact surface $E$ with boundary, as follows: $\Gamma$ is a graph which can be regarded as $\tilde{\Gamma} \cap E$ for an $m$-chart $\tilde{\Gamma}$ on a closed surface $\tilde{E}$ containing $E$ such that $\tilde{\Gamma} \cap \partial E$ are transverse intersections with edges of $\tilde{\Gamma}$. A 2-dimensional braid over $E$ presented by $\Gamma$ is the surface $S \cap (B^2 \times E)$, where $S$ is a 2-dimensional braid in $B^2\times \tilde{E}$ presented by $\tilde{\Gamma}$ (see \cite{Kamada07}).

\section{Charts on surface diagrams and 2-dimensional braids} \label{sec5}
In this section, let $F$ be a surface link and let $D$ be a surface diagram of $F$. 
We define an $m$-chart on $D$ and its presenting 2-dimensional braid over $F$. First, in Section \ref{sec:5-1}, we define a 2-dimensional braid over $F$ presented by an empty $m$-chart on $D$. Then in Section \ref{sec:5-2}, we define an $m$-chart on $D$ and its presenting 2-dimensional braid. We will show that any 2-dimensional braid over $F$ is presented by an $m$-chart on $D$ (Theorem \ref{thm:5-6}). 

\subsection{An empty $m$-chart on a surface diagram}\label{sec:5-1}

A 2-dimensional braid is presented by a motion picture consisting of isotopic transformations and hyperbolic transformations. 
For a knotted surface (i.e. a properly embedded surface) $S$ in a 4-ball, for simplicity of the argument, we assume that $S$ is in $\mathbb{R}^4$.
A {\it motion picture} of a knotted surface $S \subset \mathbb{R}^4$ is a one-parameter family $\{ \pi(S \cap (\mathbb{R}^3 \times \{t\}) )\}_{t \in \mathbb{R}}$, where $\pi \,:\,\mathbb{R}^4 \to \mathbb{R}^3, (x,y,z,t) \mapsto (x,y,z)$, is the projection (see \cite{Kamada02}). 

Let $L$ be a classical link or knotted strings (i.e. a properly embedded strings) in a 3-ball. For simplicity of the argument, we assume that $L$ is in $\mathbb{R}^3$.
For an ambient isotopy $\{h_t\}_{t \in [0,1]}$ of $\mathbb{R}^3$ rel $\partial L$, we have an isotopy (a one-parameter family) $\{h_t(L)\}$ of classical links or knotted strings. We say that $h_1(L)$ is obtained from $L$ by an {\it isotopic transformation}, and we use the notation that $L \rightarrow h_1(L)$ is an isotopic transformation \cite[Section 9.1]{Kamada02}. 
When $\{h_t(L)\}$ is an isotopy of classical braids, we will say that $L \rightarrow h_1(L)$ is a {\it braid isotopic transformation}.
   
For a classical link or knotted strings $L$, a 2-disk $B$ in $\mathbb{R}^3$ is called a {\it band} attaching to $L$ if $L \cap B$ is a pair of disjoint arcs in $\partial B$. For mutually disjoint bands $B_1, \ldots, B_n$ attaching to $L$, putting $\mathcal{B}=B_1 \cup \cdots \cup B_n$, we define a new classical link or knotted strings $h(L;\mathcal{B})$ by the closure of $(L \cup \partial \mathcal{B})-(L \cap \mathcal{B})$,  where we take the closure as a subset of $\mathbb{R}^3$. We say that $h(L;\mathcal{B})$ is obtained from $L$ by a {\it hyperbolic transformation} along the bands $B_1, \ldots, B_n$, and we use the notation that $L \dot{\to} h(L;\mathcal{B})$ is a hyperbolic transformation \cite[Section 9.1]{Kamada02}. 

Let $\sigma_1, \sigma_2, \ldots$ be the standard generators of the classical braid group.  
Let us take the $2m$-braids as follows:
\begin{eqnarray*}
&& \Pi_i=\sigma_{m+1} \sigma_{m+2}\cdots \sigma_{m+i}, \ \Pi_i^\prime=\sigma_{m-1} \sigma_{m-2}\cdots \sigma_{m-i}, \\
&& \Delta=\Pi_{m-1}\Pi_{m-2} \cdots \Pi_{1}, \ \Delta^\prime=\Pi^\prime_{m-1}\Pi^\prime_{m-2} \cdots \Pi^\prime_{1}, \\
&& \Theta=\sigma_m \cdot \Pi^\prime_{m-1} \cdot \Pi_{m-1}\cdot \sigma_m\cdot  \Pi^\prime_{m-2} \cdot \Pi_{m-2}\cdots \sigma_m\cdot\Pi^\prime_{1} \cdot \Pi_{1}\cdot\sigma_m
\end{eqnarray*}
Note that $\Delta$ (resp. $\Delta^\prime$) is the $2m$-braid obtained from an $m$-braid with a half twist by adding $m$ trivial strings before (resp. after) it. 
    
\begin{definition}
For a surface diagram $D$, 
we can take a presentation of a local part of $D$ around (1) a regular point i.e. a nonsingular point, (2) a double point curve, and (3) a branch point, as follows, where (3) has two types, i.e. (3-1) a positive branch point and (3-2) a negative branch point (see \cite[Section 2.5.2]{CKS}): 
\begin{enumerate}
\item[(1)] A 2-disk $E$. 
\item[(2)] The product of a 2-braid $\sigma_1$ and an interval. 
\item [(3)]
A surface with a motion picture (3-1) $\sigma_1 \dot{\to} e_2$ 
or (3-2) $e_2 \dot{\to} \sigma_1$, where $\dot{\to}$ is a hyperbolic transformation along a band corresponding to the crossing of the 2-braid $\sigma_1$, and $e_2$ denotes the trivial $2$-braid. 
\end{enumerate}

We define the {\it 2-dimensional braid over $F$ presented by $D$ equipped with an empty $m$-chart} as a surface defined locally for such local parts of $D$, as follows.
\begin{enumerate}
\item[(1)]
Parallel $m$ copies of $E$. 
\item[(2)]
The product of the following $2m$-braid and the interval (see Fig. \ref{fig:0429-01}):
\begin{eqnarray*}
 (\Delta^{\prime})^{-1} \cdot \Delta^{-1} \cdot \Theta.
\end{eqnarray*}

\item[(3-1)]A surface presented by a motion picture as follows (see Fig. \ref{fig:0419-01}): 
\begin{eqnarray*} 
  (\Delta^{\prime})^{-1} \cdot \Delta^{-1} \cdot \Theta \,
 \dot{\to}\, 
  (\Delta^{\prime})^{-1} \cdot \Delta^{-1} \cdot \Delta^\prime \cdot \Delta  \to e_{2m},
\end{eqnarray*}
where $\to$ is a braid isotopic transformation and $\dot{\to}$ is a hyperbolic transformation along bands corresponding to the $m$ $\sigma_m$'s for the $2m$-braid, and $e_{2m}$ denotes the trivial $2m$-braid.

\item[(3-2)] A surface 
presented by a motion picture which is the inverse of (3-1).

\end{enumerate}
We construct a 2-dimensional braid over $F$, as follows. 
Since the surface diagram is oriented, we can extend the construction over the complement of small disks each of which is a neighborhood of a triple point. Since the boundary of each of these disks presents the closure of a classical $2m$-braid which is equivalent to the trivial $2m$-braid, we can extend the construction over the disks by pasting $2m$ disks over each disk. Thus we extend the construction over the whole surface link.  
\end{definition}

\begin{figure}
 \centering\includegraphics*{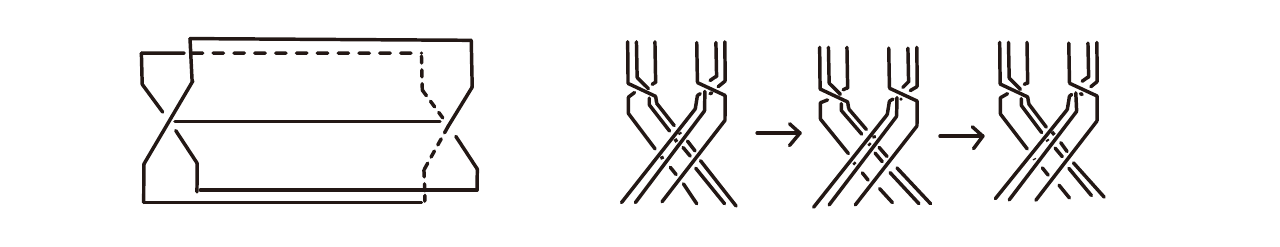}
\caption{An empty $m$-chart around a double point curve and its presenting 2-dimensional braid.}
\label{fig:0429-01}
 \end{figure}

\begin{figure}
 \includegraphics*{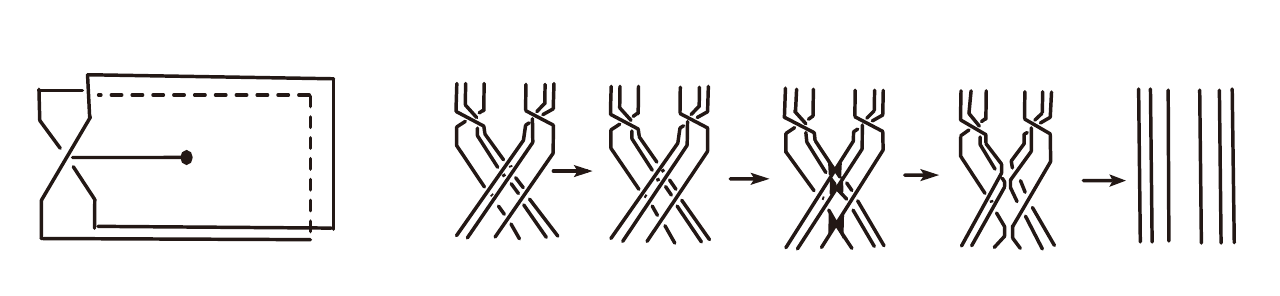}
\caption{An empty $m$-chart around a positive branch point and its presenting 2-dimensional braid.}
\label{fig:0419-01}
 \end{figure}

\begin{proposition}
The 2-dimensional braid presented by an empty $m$-chart on $D$ is well-defined. 
\end{proposition}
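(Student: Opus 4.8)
The plan is to show that the surface produced by the construction in Section~\ref{sec:5-1} is, up to equivalence of $2$-dimensional braids over $F$ in the sense of Definition~\ref{def:2-braid}, independent of every choice made: the generic projection $\pi$, the local coordinates used to present the local parts $(1)$--$(3)$ of $D$, the way the local models are extended over the complement of the triple-point neighborhoods, and the disks used to cap off over those neighborhoods. Since the construction is local, the verification splits into three tasks: (a) the prescribed local models agree on the overlaps of the local charts of $D$; (b) they extend coherently over $D$ minus small disks around the triple points, so that the result is a $2$-dimensional braid over the corresponding part of $F$; and (c) each removed disk can be capped as prescribed, and the whole construction is insensitive to the remaining choices.

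For (a) and (b) I would begin with the internal consistency of the motion picture $(3\text{-}1)$, that is, $(\Delta^{\prime})^{-1}\Delta^{-1}\Theta\,\dot{\to}\,(\Delta^{\prime})^{-1}\Delta^{-1}\Delta^{\prime}\Delta\to e_{2m}$. Here $\Delta$ is a word in $\sigma_{m+1},\dots,\sigma_{2m-1}$ and $\Delta^{\prime}$ is a word in $\sigma_1,\dots,\sigma_{m-1}$, so $\Delta$ and $\Delta^{\prime}$ commute and the final braid isotopic transformation is legitimate; and since $\Pi_j$ and $\Pi^{\prime}_k$ always involve disjoint blocks of indices, smoothing all $m$ of the $\sigma_m$'s appearing in $\Theta=\sigma_m\Pi^{\prime}_{m-1}\Pi_{m-1}\sigma_m\Pi^{\prime}_{m-2}\Pi_{m-2}\cdots\sigma_m\Pi^{\prime}_1\Pi_1\sigma_m$ leaves $\Pi^{\prime}_{m-1}\Pi_{m-1}\cdots\Pi^{\prime}_1\Pi_1=(\Pi^{\prime}_{m-1}\cdots\Pi^{\prime}_1)(\Pi_{m-1}\cdots\Pi_1)=\Delta^{\prime}\Delta$, so the hyperbolic transformation along the bands coming from those $\sigma_m$'s is exactly the stated one. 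Consequently the branch-point model matches $m$ parallel copies of a $2$-disk along the part of its boundary lying over the nonsingular side and matches the $2m$-braid $(\Delta^{\prime})^{-1}\Delta^{-1}\Theta$ along the part lying over a double point curve, and the double-point model patches similarly with the regular-point model. Because $D$ is oriented, the double point curves of $D$ away from the triple points can be coherently oriented and the $2m=m+m$ strands of the $2m$-braid coherently labelled as copies over the over-sheet and over the under-sheet, so the local pieces assemble into a well-defined surface over $D$ minus the triple-point neighborhoods. Over that region $F$ is locally one $2$-disk or two disjoint $2$-disks and the surface restricts to $m$ parallel copies over each, so conditions $(1)$ and $(2)$ of the definition of a $2$-dimensional braid hold there. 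Any two coherent choices differ by a fibre-preserving isotopy fixing a base fibre, hence yield equivalent patterns, and therefore equivalent $2$-dimensional braids over $F$.

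For (c) I would compute the classical braid read along the boundary circle of a small disk around a triple point of $D$. In the standard local picture this circle crosses six double-point-curve arcs emanating from the triple point, and the braid read off is a product of contributions of the form $(\Delta^{\prime})^{\pm1}\Delta^{\pm1}\Theta^{\pm1}$, suitably conjugated; the claim to verify is that this product equals the identity in the $2m$-strand braid group, so that its closure is the trivial $2m$-braid. Granting this, each disk may be capped by $2m$ disks as in the construction, and the way the disks sit is essentially unique, since the closure of the trivial $2m$-braid bounds $2m$ disjoint disks uniquely up to fibre-preserving isotopy (alternatively, a $2$-dimensional braid over $B^2$ with trivial boundary condition is trivial up to equivalence \cite{Kamada92, Kamada02}, via the model of a $2$-dimensional braid over a compact surface with boundary given at the end of Section~\ref{sec4}). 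Finally, changing $\pi$, the local coordinates, or the disks alters the resulting surface only by an ambient isotopy of $\mathbb{R}^4$ compatible with the associated embedding, so by Definition~\ref{def:2-braid} the equivalence class of the $2$-dimensional braid over $F$ is unchanged, which is what well-definedness means.

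The main obstacle is the braid identity in (c). One must fix the over/under and orientation conventions of the standard triple-point picture of $D$, read off the six double-point-curve contributions precisely, and verify by an explicit (finite but somewhat lengthy) manipulation in the braid group --- using the braid relation $\sigma_i\sigma_{i+1}\sigma_i=\sigma_{i+1}\sigma_i\sigma_{i+1}$, far commutativity, and the block-disjointness of $\Pi_j$ and $\Pi^{\prime}_j$ --- that the product reduces to the identity. This is the $2m$-strand analogue of the triple-point relation for charts, and the same block structure exploited in (b) is what should make the reduction go through; a secondary point requiring care is the uniqueness of the cap.
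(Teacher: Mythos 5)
Your overall strategy---localize the construction, check coherence of the local models, and control the remaining choices---is a finer-grained version of what the paper does, and your algebraic check that $\Delta$ and $\Delta^\prime$ commute (disjoint blocks of generators) and that resolving the $m$ bands of $\Theta$ yields $\Delta^\prime\Delta$ is correct and worth recording. But the proof as written has a genuine hole exactly where you yourself locate ``the main obstacle'': in (c) you only state, and do not verify, that the $2m$-braid read along the boundary of a small disk about a triple point is trivial, and everything after ``Granting this'' is conditional on that claim. Note that the paper does not prove this identity inside the proposition either; it is asserted as part of the construction in the definition preceding the proposition (``Since the boundary of each of these disks presents the closure of a classical $2m$-braid which is equivalent to the trivial $2m$-braid\dots''), so the proposition is really only asking for uniqueness of the outcome of the construction, not for its feasibility. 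By folding feasibility into your proof and then leaving it open, you end up with an incomplete argument for a statement stronger than the one being made.

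For the uniqueness itself, the paper's proof is essentially one sentence: every ambiguity (the braid isotopic transformations used in building the surface around double point curves and triple points, and the choice of caps) is concentrated in pieces that are branch-point-free $2$-dimensional braids over a $2$-disk, and such braided surfaces are determined up to equivalence by their boundary braids (\cite[Theorem 1.34]{CKS}, \cite{Rudolph}). You do have this fact, but you invoke it only for the triple-point caps; for the region away from the triple points you instead assert that ``any two coherent choices differ by a fibre-preserving isotopy fixing a base fibre,'' which is precisely the point that needs an argument and is supplied by the same boundary-determines-equivalence theorem. If you apply that theorem uniformly---to the pieces over the double point curves, over the branch-point neighborhoods, and over the caps alike---your parts (a), (b) and the uniqueness half of (c) collapse into a single short argument, and the only item left outside the proposition is the triple-point braid identity, which properly belongs to the verification that the construction in Section \ref{sec:5-1} makes sense at all.
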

\begin{proof}
It suffices to show that the construction of the 2-dimensional braid is unique up to equivalence. The ambiguity comes from the braid isotopic transformations constructing the surfaces around double point curves and around triple points of $D$. These surfaces can be regarded as 2-dimensional braids over a 2-disk $E$. It is known (see \cite[Theorem 1.34]{CKS}, \cite{Rudolph}) that for 2-dimensional braids on $E$ of the same degree and with no branch points, they are equivalent if the boundaries are the same. Hence the construction is unique up to equivalence. 
\end{proof}

Let $\tilde{D}$ be the surface diagram of the 2-dimensional braid presented by an empty $m$-chart on $D$. 
Around a double point curve of $D$, $\tilde{D}$ has $m(m-1)+m^2=m(2m-1)$ double point curves. 

\subsection{An $m$-chart on a surface diagram and the 2-dimensional braid}\label{sec:5-2}
For a surface diagram $D$, let us denote its singularity set by $\mathrm{Sing}(D)$.

\begin{definition}\label{def:chart-D}
 
A finite graph $\Gamma$ on $D$ is called an {\it $m$-chart on $D$} if it satisfies the following conditions: 

\begin{enumerate}[(1)]
 \item For a compact surface $E \subset D-\mathrm{Sing}(D)$, $\Gamma \cap E$ is an $m$-chart on $E$ (see Section \ref{sec4}). 

\item
The intersection $\Gamma \cap \mathrm{Sing}(D)$ consists of a finite number of points, each of which forms a vertex of degree 2 of $\Gamma$ as indicated in Fig. \ref{fig:0417-01}, where $i \in \{1,\ldots, m-1\}$. 
 \end{enumerate}
\end{definition}
\begin{figure}[ht]
 \centering\includegraphics*{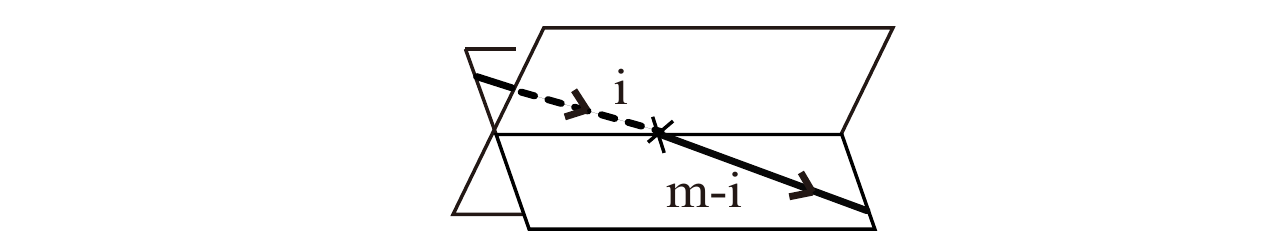}
\caption{An $m$-chart around a double point curve, where $i \in \{1,\ldots,m-1\}$. For simplicity, we omit the over/under information of each sheet.}
\label{fig:0417-01}
 \end{figure}

\begin{definition}\label{def:20131211}
Let $\Gamma$ be an $m$-chart on $D$. 
We define the {\it 2-dimensional braid presented by $\Gamma$}, as follows. 
\begin{enumerate}[(1)]

\item
For a compact surface $E \subset D-\mathrm{Sing}(D)$, the presented 2-dimensional braid over $E$ is the one presented by $\Gamma \cap E$ (see Section \ref{sec4}).  

\item
For a compact surface $E$ with $E \cap \mathrm{Sing}(D) \neq \emptyset$ and $\Gamma \cap E=\emptyset$, the 2-dimensional braid is the one presented by an empty $m$-chart on $E$. 
\item
Around a double point curve presented by the product of a 2-braid $\sigma_1$ and an interval such that the $m$-chart is as in the top figure in Fig. \ref{fig:0417-02}, the 2-dimensional braid is as follows:
\begin{eqnarray*}
 \sigma_i^{\epsilon} \cdot(\Delta^\prime)^{-1}\cdot (\Delta)^{-1} \cdot\Theta 
\to (\Delta^\prime)^{-1} \cdot(\Delta)^{-1} \cdot\sigma_{m-i}^{\epsilon}\cdot\Theta
\to (\Delta^\prime)^{-1} \cdot(\Delta)^{-1} \cdot\Theta \cdot\sigma_{2m-i}^{\epsilon},
\end{eqnarray*}
where $\epsilon=+1$ (resp. $-1$) if the edge of the $m$-chart is  oriented from upper left to bottom right (resp. bottom right to upper left), and $i \in \{1,\ldots,m-1\}$. 

The other cases are likewise; see Fig. \ref{fig:0417-02}. 
\end{enumerate}

\begin{figure}[ht]
 \centering\includegraphics*{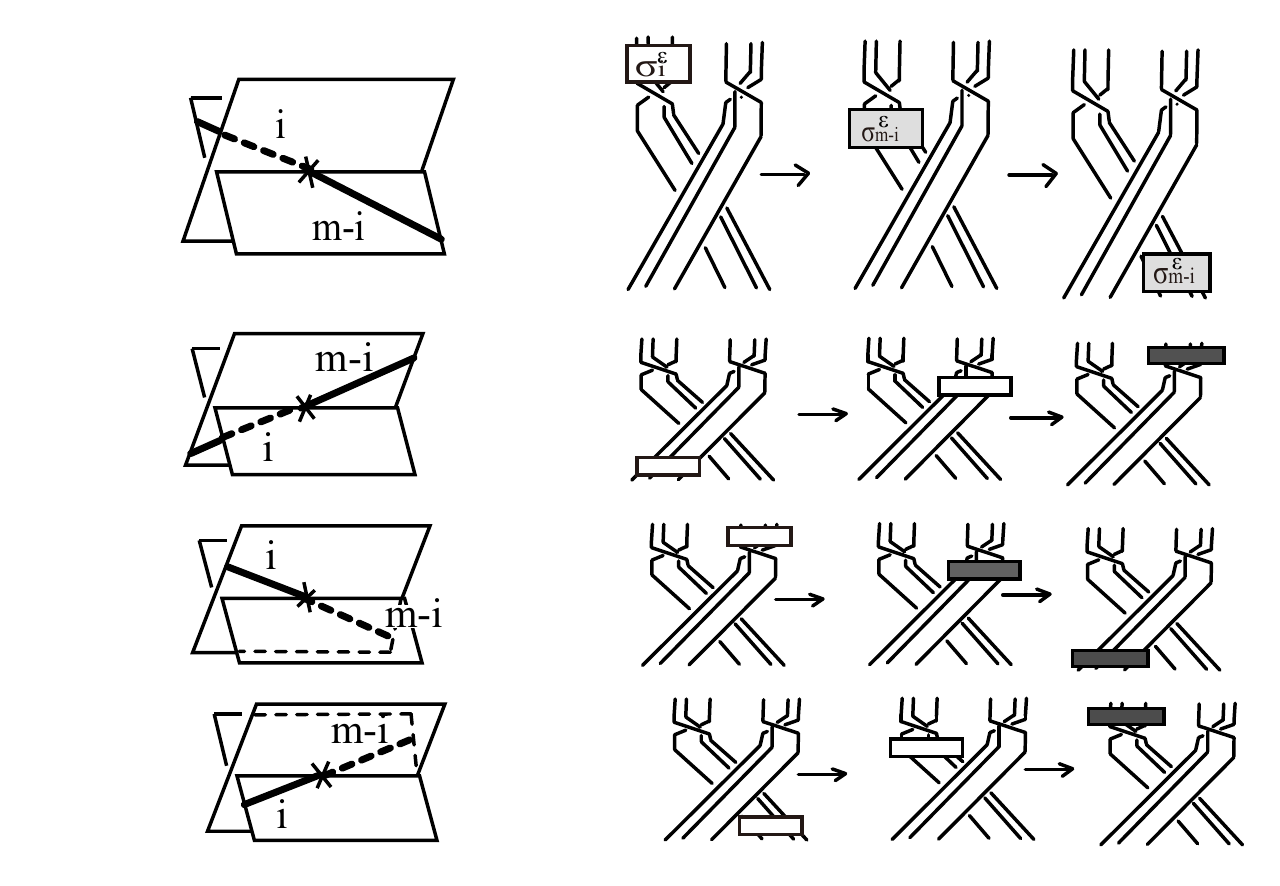}
\caption{An $m$-chart around a double point curve and its presenting 2-dimensional braid. We indicate by the white box (resp. black box) the $m$-braid $\sigma_i^\epsilon$ (resp. $\sigma_{m-i}^\epsilon$), where $i \in \{1,\ldots,m-1\}$ and $\epsilon \in \{+1, -1\}$. For simplicity, we omit the orientations of the edges.}
\label{fig:0417-02}
 \end{figure}
\end{definition}

\begin{theorem}\label{thm:5-6}
A 2-dimensional braid over a surface link $F$ is presented by an $m$-chart on its surface diagram $D$.
\end{theorem}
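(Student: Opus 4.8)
The plan is to build the required $m$-chart on $D$ from an $m$-chart of the pattern on $\Sigma$, by pushing it down to $D$ through the projection and then rectifying it near $\mathrm{Sing}(D)$ so that it meets the singularity set in the standard way of Definition~\ref{def:chart-D} and presents the given braid. First I would write the given 2-dimensional braid over $F$ as $f(S)$, with $S$ a 2-dimensional braid over $\Sigma$ in $B^2\times\Sigma$ and $f\colon B^2\times\Sigma\to N(F)$ the associated embedding, and fix $\pi$ so that it is generic with respect to both $F$ and $f(S)$, as in Section~\ref{sec3}. By the construction reviewed in Section~\ref{sec4}, $S$ admits an $m$-chart $\Gamma_0$ on $\Sigma$. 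Let $c\colon\Sigma\to D$ be the composition of the identification $\Sigma\cong F$ with $\pi$, and let $C:=c^{-1}(\mathrm{Sing}(D))\subset\Sigma$; this is a finite union of curves, two lying over each double point curve and meeting over the triple points, and $c$ restricts to a local embedding on $\Sigma-C$. I would then apply a fiber-preserving ambient isotopy of $B^2\times\Sigma$ induced by an isotopy of $\Sigma$ --- which is an equivalence of 2-dimensional braids over $\Sigma$, hence, after composition with $f$, an equivalence of 2-dimensional braids over $F$ --- so that every $1$- and $6$-valent vertex of $\Gamma_0$ lies off $C$ and $\Gamma_0$ meets $C$ transversally in finitely many interior points of edges; shrinking a collar of $C$, I may also assume that inside it $\Gamma_0$ consists only of single edges crossing $C$ once. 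Setting $\Gamma:=c(\Gamma_0)$ on $D-\mathrm{Sing}(D)$, the fact that $c$ is a local embedding there makes $\Gamma\cap(D-\mathrm{Sing}(D))$ an $m$-chart on surfaces, and each point of $\Gamma\cap\mathrm{Sing}(D)$ is then a single edge crossing a double point curve, i.e.\ a degree-$2$ vertex as in Figure~\ref{fig:0417-01}; so $\Gamma$ is an $m$-chart on $D$.

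It remains to identify the 2-dimensional braid presented by $\Gamma$ with $f(S)$, and this is the delicate part. Over a compact $E\subset D-\mathrm{Sing}(D)$, where $c$ restricts to an embedding of $c^{-1}(E)$ onto $E$, Definition~\ref{def:20131211}(1) makes the presented braid equal to the image of the part of $S$ lying over $c^{-1}(E)$; since $\Gamma_0$ presents $S$, this is $f(S)$ over $E$. Near a point of $\mathrm{Sing}(D)$ one compares instead with the standard local pieces of Definition~\ref{def:20131211}. Over a local model of a double point curve, $N(F)$ is a disjoint union of neighborhoods of the over- and the under-sheet, so $f(S)$ there is the disjoint union of two copies of $S$ over two disks of $\Sigma$, placed so that in $\mathbb{R}^3$ one lies entirely over the other; because $F$ is oriented, the resulting arrangement of the two copies of $B^2$ after projection is exactly the one encoded by the $2m$-braid $(\Delta')^{-1}\Delta^{-1}\Theta$ of Section~\ref{sec:5-1}, and an edge of $\Gamma_0$ of label $i$ crossing $C$ contributes precisely a $\sigma_i^{\epsilon}$-box that slides across the wall to $\sigma_{2m-i}^{\epsilon}$ as in Definition~\ref{def:20131211}(3) and Figure~\ref{fig:0417-02}. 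In this local model both $f(S)$ and the braid presented by $\Gamma$ are 2-dimensional braids over a disk, without branch points and with the same boundary, hence equivalent by the uniqueness statement used in Section~\ref{sec:5-1} (\cite[Theorem~1.34]{CKS}, \cite{Rudolph}).

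The triple-point and the positive and negative branch-point models are handled in the same manner: near a triple point the three local $m$-charts on the three sheets combine through degree-$2$ vertices and one compares with the $2m$-braided surface pasted over the triple-point disk in Section~\ref{sec:5-1}, while near a branch point the hyperbolic transformation $\sigma_1\,\dot{\to}\,e_2$ of the diagram is covered by the motion picture $(\Delta')^{-1}\Delta^{-1}\Theta\,\dot{\to}\,(\Delta')^{-1}\Delta^{-1}\Delta'\Delta\to e_{2m}$ of Section~\ref{sec:5-1}, again absorbing ambiguity through uniqueness of braided surfaces over a disk (now allowing branch points, via the $m$-charts on compact surfaces with boundary from the end of Section~\ref{sec4}). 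Assembling $c(\Gamma_0)$ over $D-\mathrm{Sing}(D)$ with these standard pieces over a neighborhood of $\mathrm{Sing}(D)$ then yields an $m$-chart $\Gamma$ on $D$ whose presented 2-dimensional braid is $f(S)$. I expect the main obstacle to be precisely this last comparison: verifying that the $(\Delta')^{-1}\Delta^{-1}\Theta$-weaving that the identification of $N(F)$ with $B^2\times\Sigma$ forces on $f(S)$ near $\mathrm{Sing}(D)$ really matches the standard local pieces, and in particular getting the orientation conventions right at a triple point, where three weaving regions overlap.
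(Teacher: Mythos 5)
Your overall route is different from the paper's: you push the pattern's $m$-chart $\Gamma_0$ down from $\Sigma$ to $D$ and then verify the identification with $f(S)$ piece by piece, whereas the paper first proves that the \emph{empty} $m$-chart on $D$ presents the \emph{trivial} 2-dimensional braid over $F$, and only then superimposes the braiding information of $S$ as a chart. The difference is not cosmetic: the paper's first step is the genuinely global part of the argument, and your proposal has no counterpart to it. Concretely, the paper shows that for any closed path $l$ in $F$, the projection of $l$ meets the double point curves of $D$ an even number of times (via the checkerboard coloring of the complementary regions of $\mathbb{R}^3\setminus D$), so the monodromy of the empty-chart surface around $l$ is a number of \emph{full} twists and the fiber $Q_m$ returns to itself pointwise. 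This is what guarantees that the weaving by $(\Delta')^{-1}\Delta^{-1}\Theta$ near the double point curves closes up globally into the product framing $f(B^2\times\Sigma)=N(F)$ of Definition \ref{def:2-braid}, rather than into a twisted identification of the fibers.

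Your verification, by contrast, is entirely local: over each piece of $D$ you match the presented surface with $f(S)$ using the uniqueness of braided surfaces over a \emph{disk} with prescribed boundary. That uniqueness statement cannot be applied to $F$ itself, and matching local models over a cover of $D$ does not imply that the global surfaces agree --- the local identifications must be shown compatible on overlaps, and around an essential loop of $F$ crossing $\mathrm{Sing}(D)$ this compatibility is exactly the parity/monodromy statement above. Each transverse crossing of a double point curve contributes a half twist of the fiber (this is also why an edge labeled $i$ reappears as $\sigma_{2m-i}^{\epsilon}$, not $\sigma_{m+i}^{\epsilon}$, in Definition \ref{def:20131211}(3)); if a loop crossed an odd number of times the monodromy would reverse $Q_m$ and the chart-presented surface would not be $f(S)$ for the given associated embedding $f$. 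So you need to insert the evenness argument (or an equivalent control of the global framing, cf.\ Remark \ref{rem:5-3}) before the final assembly; you correctly flagged the orientation conventions at triple points as delicate, but the missing step is this global one, not the triple-point bookkeeping.
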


\begin{proof}
First we show that an empty $m$-chart on $D$ presents a trivial 2-dimensional braid over $F$.
Let us denote by $S$ the 2-dimensional braid presented by an empty $m$-chart on $D$, and let $Q_m$ be $m$ distinct interior points of $B^2$. We regard $S$ as a fiber bundle over $F$ in $N(F)=B^2 \times F$.  By definition, we can see that for each point $x$ of $F$, its fiber $S \cap (B^2 \times \{x\})$ is $Q_m$ as sets. 
Hence, in order to show that $S$ is trivial, it suffices to show that for any closed path $l$ embedded in $F$, the fibers $Q_m$ of the initial point and the terminal point of $l$ coincide pointwise. By perturbing $l$ if necessary, we can assume that the projection of $l$ on $D$ intersects with the singularity of $D$ at $n$ points as transverse intersections with double point curves of $D$. It is known that the complementary regions of $\mathbb{R}^3$ divided by $D$ admits a checkerboard coloring such that each region is colored either black or white and adjacent regions always have distinct colors (see \cite[Section 2.5.2]{CKS}). Hence, by considering a closed path $l^\prime$ obtained by shifting $l$ to a diagonal direction such that the intersections of $l^\prime$ and $D$ are $n$ transverse intersections, we can see that $n$ is even. It implies that $l$ presents a 1-dimensional closed braid $L$ in $l \times B^2 \subset N(F)$ with even numbers of half twists, i.e. with several full twists. Thus the initial points and the terminal points of $L$, i.e. the fibers of the initial point and the terminal point of $l$, coincide pointwise. Thus $S$ is a trivial 2-dimensional braid. 

Hence any 2-dimensional braid $S^\prime$ over $F$ is obtained from $S$ by adding the braiding information.  
Let $\Gamma$ be the projection to $D$ of the singularity set of the diagram of $S^\prime$. 
By perturbing $S^\prime$ if necessary, we can assume that the intersection of $\Gamma$ and the singularity set of $D$  consists of a finite number of transverse intersections of edges of $\Gamma$ and double point curves of $D$. 
Around each intersection, the 2-dimensional braid is as in Definition \ref{def:20131211} (3) (see also Fig. \ref{fig:0417-02}). Thus, by adding the chart information as in Definition \ref{def:chart-D} (2) to the edges of $\Gamma$ around the intersection, and adding the original chart information to the other edges, we obtain an $m$-chart on $D$ which presents $S^\prime$. Thus we have the required result. 
\end{proof}

\begin{rem}\label{rem:5-3}
Let $F$ be a surface link of type $\Sigma$.
Any 2-dimensional braid $S$ presented by an $m$-chart on $D$ has the same framing of $N(F)$ determined from an empty $m$-chart on $D$, where a {\it framing} (see \cite[Chapter 12]{Lickorish}) of $N(F)$ is a parametrization of $N(F)$ by $\{x\} \times \Sigma$ ($x \in B^2$), where $f(B^2 \times \Sigma) =N(F) \cong B^2 \times F$ for the associated embedding $f$ of $S$ (up to fiber-preserving ambient isotopy of $N(F) \cong B^2 \times F$ rel $B^2 \times \{x_0\}$ for a base point $x_0 \in F$).
\end{rem}

\section{Roseman moves for surface diagrams with $m$-charts}\label{sec6}
 
\begin{definition}
We define {\it Roseman moves for surface diagrams with $m$-charts} by the local moves as illustrated in Figs. \ref{fig:0417-03} and \ref{fig:0417-04}.  
\end{definition}
\begin{figure}
 \includegraphics*{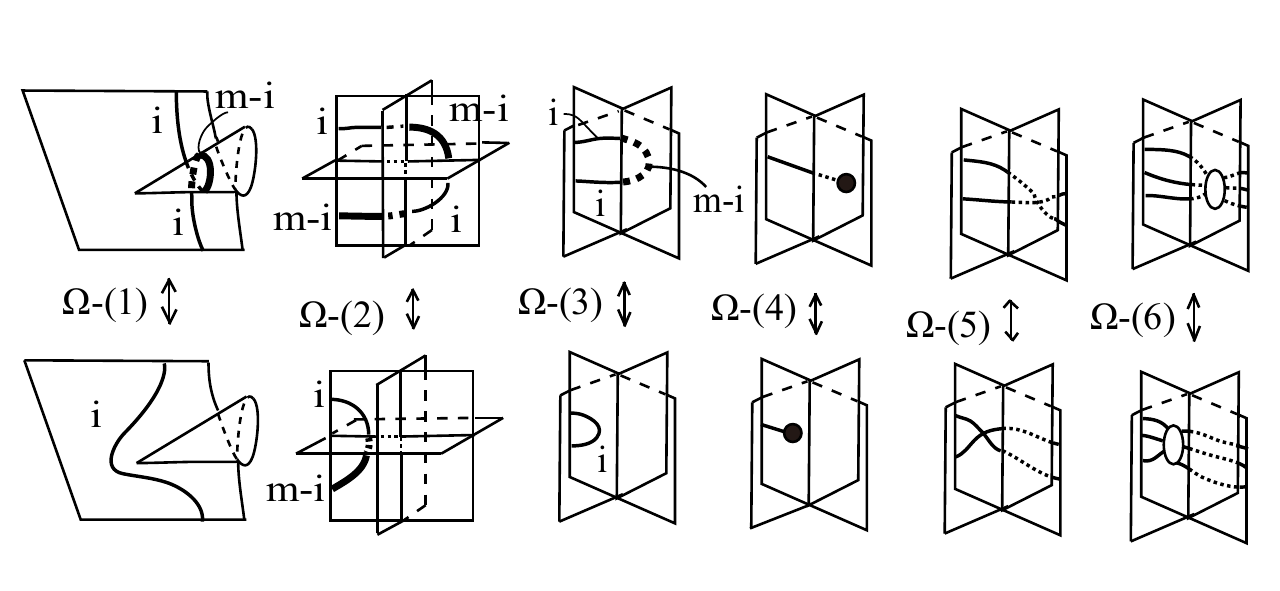}
\caption{Roseman moves for surface diagrams with $m$-charts, where $i \in \{1,\ldots,m-1\}$. For simplicity, we omit the over/under information of each sheet, and orientations and labels of edges of $m$-charts. }
\label{fig:0417-04}
 \end{figure}%
 
\begin{theorem}\label{prop:7-1}
Roseman moves for surface diagrams with $m$-charts as illustrated in Fig. \ref{fig:0417-03} and \ref{fig:0417-04} are well-defined, i.e. for each pair of Roseman moves, the $m$-charts on the indicated diagrams present equivalent 2-dimensional braids. 
\end{theorem}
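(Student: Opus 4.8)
The plan is to verify the statement move by move, reducing each to a comparison of two surface diagrams equipped with $m$-charts. For a Roseman move $R$ taking a local diagram $D_1$ to a local diagram $D_2$, both carrying $m$-charts $\Gamma_1$ and $\Gamma_2$, I must show that the 2-dimensional braids presented by $(D_1,\Gamma_1)$ and $(D_2,\Gamma_2)$ are equivalent as 2-dimensional braids over the ambient surface link, i.e. via a fiber-preserving ambient isotopy of $B^2 \times \Sigma$ over the isotopy of $\Sigma$ induced by $R$. By Theorem \ref{thm:5-6} every such braid is encoded by its $m$-chart, so it suffices to work with charts and their presenting motion pictures. I would first dispose of the moves in Fig. \ref{fig:0417-03} that occur away from the chart (the classical Roseman moves performed on a diagram carrying an empty $m$-chart in the region being modified): here the presented braid is the trivial braid with the fixed framing of Remark \ref{rem:5-3} on both sides, and since both diagrams present the same surface link $F$ with the same framing, the trivial 2-dimensional braids over them are equivalent.

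The main work is for the moves that interact nontrivially with the chart, i.e. the new moves in Fig. \ref{fig:0417-04} together with the classical Roseman moves carrying chart edges. The key step is to translate each such move into an identity of motion pictures built from the words $\Delta,\Delta^\prime,\Theta,\Pi_i,\Pi_i^\prime$ and the generators $\sigma_j^{\pm 1}$, using the local models of Definitions \ref{def:20131211} and the empty-chart construction. For instance, a move that slides a chart edge of label $i$ across a double point curve produces, on one side, the word $\sigma_i^\epsilon \cdot (\Delta^\prime)^{-1}(\Delta)^{-1}\Theta$ and, on the other side, $(\Delta^\prime)^{-1}(\Delta)^{-1}\Theta\cdot \sigma_{2m-i}^\epsilon$ (compare the displayed computation preceding Fig. \ref{fig:0417-02}); I would check these are braid-isotopic via the conjugation relations $\Theta^{-1}\sigma_i\Theta$-type identities already implicit in that display, so the two motion pictures are related by braid isotopic transformations and hence present equivalent braided surfaces over a 2-disk by \cite[Theorem 1.34]{CKS}. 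For moves creating or cancelling branch points or triple points of $D$, I would use the positive/negative branch point models (Fig. \ref{fig:0419-01}) and the fact that the boundary $2m$-braids involved are trivial, so the disks pasted in are determined up to equivalence by the same uniqueness statement. Where a chart vertex (black or white) passes through a sheet or a double point curve, I would invoke the chart moves on a surface that are known to preserve the presented braid over a closed surface \cite{Kamada92-2, Kamada02, N}, localized via the restriction construction of Section \ref{sec4}.

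Concretely, the steps in order are: (i) fix, for each figure in Figs. \ref{fig:0417-03}--\ref{fig:0417-04}, the two local diagrams and their charts, and choose compatible decompositions into the standard local pieces of Definitions in Section \ref{sec5}; (ii) write down the two presenting motion pictures (or fiber bundle data, when the chart is empty there) explicitly; (iii) produce, for each pair, an explicit chain of braid isotopic transformations, hyperbolic transformations, and chart moves on a 2-disk or 2-sphere connecting them, keeping track of the fixed fiber over the base point; (iv) conclude equivalence of the 2-dimensional braids over $F$ from the disk/sphere case together with the compatibility of the framings (Remark \ref{rem:5-3}). I expect the hard part to be step (iii) for the new moves in Fig. \ref{fig:0417-04} that involve a chart edge running through a triple point region: there the braid is reconstructed by pasting $2m$ disks over a disk whose boundary is a closed trivial $2m$-braid, and I must check that inserting the $\sigma_i^\epsilon$ data on the incoming edges produces matching boundary data on both sides of the move, so that the two pasted-disk surfaces are again forced to agree by \cite[Theorem 1.34]{CKS}; verifying this boundary compatibility is essentially a careful but routine braid-word bookkeeping, and once it is in place the rest follows formally. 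The other potential subtlety is orientation: one must check each move is compatible with the requirement that $D$ be oriented (used to extend the empty-chart construction over triple-point disks), but this is built into the figures by construction.
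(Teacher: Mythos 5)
Your overall strategy for the generic cases---isotope the local diagram into the form of a 2-dimensional braid over a 2-disk, check that the two presenting motion pictures agree on the boundary, and invoke the uniqueness of braided surfaces over a disk with no branch point or exactly one branch point (\cite[Theorem 1.34]{CKS})---is exactly what the paper does for $R$--(3),(4),(5),(7) and most of the $\Omega$-moves, and that part of your outline is sound. The gaps are at precisely the moves that cannot be reduced to that uniqueness statement, which is where the paper has to prove three separate lemmas.

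First, for $R$--(1) and the moves you file under ``empty chart on both sides,'' you assert that both sides present ``the trivial braid with the same framing'' and conclude equivalence. Remark \ref{rem:5-3} only says that the framing is the one determined by the empty $m$-chart on the \emph{given} diagram; it does not say that the framings determined by the empty charts on the two diagrams related by the move agree. Since equivalence of 2-dimensional braids (Definition \ref{def:2-braid}) requires an ambient isotopy with $g_1\circ f=f^\prime$, two trivial patterns with a priori different framings are not automatically equivalent, so you are assuming what must be proved. Establishing this compatibility is the entire content of the paper's Lemma \ref{lem1}, which writes out the motion picture of the surface presented by the empty chart on the $R$--(1) bubble and constructs an explicit ambient isotopy collapsing it while controlling the framing; that same isotopy is then what proves $\Omega$--(1) (Lemma \ref{lem3}), a case your appeal to chart moves on a closed surface does not cover because the edge crosses the singularity set of $D$. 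Second, you never really engage with $R$--(6), where a branch point of $D$ passes through another sheet: there the two local sheets must be separated into components, and the empty-chart construction leaves residual parallel chart edges (half-twist bands) along the former double point curve on each component; comparing the two sides requires the observation that an $m$-braid with a half twist is preserved by $\sigma_i\mapsto\sigma_{m-i}$, followed by applications of $\Omega$--(1) (the paper's Lemma \ref{lem2}). Your ``pasted disks are determined by the boundary'' remark does not apply here, since the relevant surfaces are not braided surfaces over a disk with matching boundaries until this decomposition and relabeling has been carried out. Without arguments for these cases the proof is incomplete.
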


We give the proof in Section \ref{sec8}. Theorem \ref{prop:7-1} implies the following corollary.
 
\begin{corollary}\label{thm:6-2}
Two 2-dimensional braids over a surface link $F$ of degree $m$  are equivalent if their surface diagrams with $m$-charts are related by a finite sequence of Roseman moves and ambient isotopies of $\mathbb{R}^3$.
\end{corollary}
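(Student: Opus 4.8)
The plan is to induct on the length of the sequence of Roseman moves and ambient isotopies of $\mathbb{R}^3$ relating the two surface diagrams with $m$-charts $(D_1,\Gamma_1)$ and $(D_2,\Gamma_2)$, so that it suffices to treat two atomic cases: a single Roseman move for surface diagrams with $m$-charts, and a single ambient isotopy of $\mathbb{R}^3$. Throughout, write $\widetilde{S}_j$ for the 2-dimensional braid presented by $\Gamma_j$ on $D_j$; this is well defined up to equivalence and, by Remark \ref{rem:5-3}, carries a prescribed framing of the ambient tubular neighborhood.

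The first atomic case is precisely Theorem \ref{prop:7-1}: if $(D_2,\Gamma_2)$ is obtained from $(D_1,\Gamma_1)$ by one of the moves in Figures \ref{fig:0417-03} and \ref{fig:0417-04}, then $\Gamma_1$ and $\Gamma_2$ present equivalent 2-dimensional braids, so $\widetilde{S}_1$ and $\widetilde{S}_2$ are equivalent. Nothing more is required here than to invoke that theorem, whose proof is carried out in Section \ref{sec8}.

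For the second atomic case, suppose $D_2=h_1(D_1)$ and $\Gamma_2=h_1(\Gamma_1)$ for an ambient isotopy $\{h_u\}_{u\in[0,1]}$ of $\mathbb{R}^3$. I would lift $\{h_u\}$ to the ambient isotopy $\{h_u\times\mathrm{id}\}$ of $\mathbb{R}^4=\mathbb{R}^3\times\mathbb{R}$, which commutes with the projection $\pi$, carries a surface link with diagram $D_1$ to one with diagram $D_2$, and restricts to a fiber-preserving diffeomorphism of tubular neighborhoods. Since the construction of the 2-dimensional braid presented by an $m$-chart (Section \ref{sec:5-1} and Definition \ref{def:20131211}) refers only to the local diffeomorphism type of the surface diagram along its strata, to the labels and orientations of the chart edges, and to the over/under information with respect to $\pi$ --- all preserved by a $\pi$-commuting diffeomorphism --- this lift carries a choice of $\widetilde{S}_1$ to a valid presentation of the braid presented by $\Gamma_2$ on $D_2$. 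Because it respects the tubular neighborhoods, hence the associated embeddings, it is an equivalence of 2-dimensional braids in the sense of Definition \ref{def:2-braid} and preserves the framing of Remark \ref{rem:5-3}; well-definedness of the braid presented by a chart then gives $\widetilde{S}_1\sim\widetilde{S}_2$, completing the induction.

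Granting Theorem \ref{prop:7-1}, the only genuine content is the second case, and the delicate point is the naturality claim: one must verify that the auxiliary choices in the motion-picture construction --- the braid-isotopic transformations filling in regular regions, and the identification of a tubular neighborhood with $B^2\times\Sigma$ --- can be transported along the lift without changing the equivalence class. This rides on the commutation with $\pi$ together with the rigidity of branch-point-free 2-dimensional braids over a 2-disk (\cite[Theorem 1.34]{CKS}, \cite{Rudolph}) already exploited throughout Section \ref{sec5}; I expect it to be routine, the real difficulty of the paper being concentrated in Theorem \ref{prop:7-1}.
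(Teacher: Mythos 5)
Your argument is correct and matches the paper's (implicit) reasoning: the paper states Corollary \ref{thm:6-2} as an immediate consequence of Theorem \ref{prop:7-1} without further proof, and your induction on the length of the sequence, invoking Theorem \ref{prop:7-1} for each Roseman move and lifting an ambient isotopy $\{h_u\}$ of $\mathbb{R}^3$ to $\{h_u\times\mathrm{id}\}$ of $\mathbb{R}^4$ for the other case, is exactly the routine verification being left to the reader. Your appeal to the well-definedness of the presented braid (via the rigidity of branch-point-free braided surfaces over a disk) to handle the auxiliary choices is the right way to close the ambient-isotopy case.
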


For surface links $F$ and $F^\prime$,
if $F$ is equivalent to $F^\prime$, then there is an ambient isotopy of $\mathbb{R}^4$ carrying $N(F)=B^2 \times F$ to $N(F^\prime)=B^2 \times F^\prime$ as $B^2$-bundle over a surface. If $S$ is a 2-dimensional braid in $N(F)$, then $S$ is sent to a 2-dimensional braid in $N(F^\prime)$. Hence, together with Theorem \ref{prop:7-1}, we have the following remark. Let $D$ and $D^\prime$ be the surface diagrams of $F$ and $F^\prime$ respectively. 

\begin{rem}\label{thm:6-3}
For equivalent surface links $F$ and $F^\prime$, 
any 2-dimensional braid over $F$ can be deformed by equivalence to the form of a 2-dimensional braid over $F^\prime$. 
In particular, for an $m$-chart $\Gamma$ on $D$ presenting a 2-dimensional braid over $F$, we can deform $\Gamma$ to an $m$-chart on $D^\prime$ which presents a 2-dimensional braid over $F^\prime$, by a finite number of ambient isotopies of $\mathbb{R}^3$ and Roseman moves for surface diagrams with $m$-charts. 
\end{rem}

For a surface link $F$, the {\it braid index} of $F$, denoted by $\mathrm{Braid}(F)$, is the minimum number of degree of a 2-dimensional braid over the standard $S^2$ which is equivalent to $F$ as a surface link. 
We remark that this invariant exists for any surface link, since we assume that surface links are oriented and any oriented surface link can be presented in the form of a 2-dimensional braid over the standard $S^2$ \cite{Kamada94}.
The following proposition follows from Remark \ref{thm:6-3}.
\begin{proposition}\label{cor:6-4}
For a 2-dimensional braid $S$ over a surface link $F$ of degree $m$, we have
\[
\mathrm{Braid}(S) \leq m \cdot \mathrm{Braid}(F), 
\]
where we regard $S$ as a surface link. 
\end{proposition}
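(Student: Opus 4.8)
The plan is to exhibit $S$, regarded as a surface link, as a $2$-dimensional braid over the standard $S^2$ of degree $m\cdot\mathrm{Braid}(F)$; since $\mathrm{Braid}(S)$ is by definition the least degree of such a presentation, the inequality follows at once. So everything reduces to producing, up to equivalence of surface links, such a presentation of $S$.

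First I would set $n=\mathrm{Braid}(F)$ and use the definition of the braid index: $F$ is equivalent as a surface link to a $2$-dimensional braid $G$ of degree $n$ over the standard $S^2$, so that $G\subset B^2\times S^2$ and the projection $p_0\colon B^2\times S^2\to S^2$ restricts on $G$ to a simple branched covering of degree $n$. As observed in the paragraph preceding Remark \ref{thm:6-3}, an equivalence of surface links from $F$ to $G$ is realized by an ambient isotopy of $\mathbb{R}^4$ carrying the $B^2$-bundle $N(F)\cong B^2\times F$ onto $N(G)\cong B^2\times G$; under this isotopy $S$ is carried to a $2$-dimensional braid $S'$ over $G$ of degree $m$, with $S'$ equivalent to $S$ as a surface link. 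Here I would fix the identification $N(G)\cong B^2\times G$ inside $B^2\times S^2$ so that the normal $B^2$-fibre over a point $q\in G$ is a small disk contained in $p_0^{-1}(p_0(q))$, which makes the two fibrations composable.

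Next I would analyse the composite $\varphi=p_0\circ p|_{S'}\colon S'\to G\to S^2$, where $p\colon N(G)\to G$ is the bundle projection. Counting points in a generic fibre shows $\varphi$ is a branched covering of degree $mn$: over a point $x\in S^2$ lying under no branch point of $G\to S^2$ there are $n$ points of $G$, with $m$ or $m-1$ points of $S'$ over each. Its branch points are of two kinds — those lying over a branch point of $S'\to G$, and, over each branch point $q_0\in G$ of $G\to S^2$, the (exactly $m$, after a preliminary general-position perturbation) points of $S'$ in the normal fibre of $q_0$, each of which becomes a simple branch point of $\varphi$. I would then separate these branch points: near a point $x_0\in S^2$ carrying $k$ of them, $S'$ is, over a small disk $D\ni x_0$, the union of $mn-2k$ trivial sheets and $k$ elementary branch pieces all lying over $D$, and an ambient isotopy of $B^2\times S^2$ supported over $D$ slides these pieces to lie over $k$ pairwise disjoint subdisks. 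Carrying this out everywhere produces a surface $S''$, equivalent to $S'$ and hence to $S$ as a surface link, on which $\varphi$ is a simple branched covering of degree $mn$ satisfying conditions (1) and (2) of the definition; that is, $S''$ is a $2$-dimensional braid over the standard $S^2$ of degree $mn=m\cdot\mathrm{Braid}(F)$, whence $\mathrm{Braid}(S)\le m\cdot\mathrm{Braid}(F)$.

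I expect the main obstacle to be this last separation step: verifying that the branch points of $\varphi$ stacked over a single point of $S^2$ — in particular the $m$ of them sitting over a branch point of $G\to S^2$ — really can be pulled apart by an ambient isotopy of $B^2\times S^2$ without disturbing the degree-$mn$ branched-covering structure, and that the resulting surface is genuinely a simple branched covering of the required local type. This is a standard general-position argument for simple braided surfaces, but it is where the care is needed; the rest is bookkeeping with the definitions and with the transfer statement preceding Remark \ref{thm:6-3}.
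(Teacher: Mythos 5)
Your strategy is the one the paper itself uses: its entire proof of Proposition \ref{cor:6-4} is the remark that it ``follows from Remark \ref{thm:6-3}'', i.e.\ replace $F$ by an equivalent $2$-dimensional braid $G$ of degree $n=\mathrm{Braid}(F)$ over the standard $S^2$, transfer $S$ to a degree-$m$ braid $S'$ over $G$, and read the composite of the two braided-covering structures as a degree-$mn$ braid over $S^2$. You have tried to fill in the composition step, which the paper leaves entirely implicit, and that is where the one step that fails as written occurs.

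You cannot ``fix the identification $N(G)\cong B^2\times G$ inside $B^2\times S^2$ so that the normal $B^2$-fibre over every $q\in G$ is contained in $p_0^{-1}(p_0(q))$''. At a branch point $q_0$ of $p_0|_G\colon G\to S^2$ the local model of $G$ is $w\mapsto(w,w^2)$ with $w$ the $B^2$-coordinate, so $T_{q_0}G$ \emph{coincides with} the tangent plane of the fibre $p_0^{-1}(p_0(q_0))$; a normal disk at $q_0$ therefore necessarily points in the $S^2$-direction and cannot lie in that fibre. Your identification is available only away from the branch points of $G$ (where $G$ is transverse to the $p_0$-fibres, so the fibre direction is a normal complement), and your description of what happens over $p_0(q_0)$ --- $m$ branch points of $\varphi$ stacked over a single point of $S^2$, to be pulled apart afterwards --- is an artifact of the impossible picture. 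With a correctly chosen tubular neighborhood (normal fibres in the $S^2$-direction near $q_0$), and after isotoping $S'$ so that its $m$ sheets are constant sections of $N(G)$ over a small disk in $G$ about $q_0$, each sheet is a push-off of $G$ and hence a double branched cover of $S^2$ branched at the pushed-off point; these $m$ simple branch points of $\varphi$ already lie over $m$ distinct points of $S^2$. What remains is only the routine general-position bookkeeping you describe (no branch point of $S'\to G$ over a branch point of $G\to S^2$, all branch values of $\varphi$ distinct) plus the check that, after shrinking $N(G)$ and flattening as above, $S''$ meets every fibre $B^2\times\{x\}$ positively in $mn$ or $mn-1$ points. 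With that local correction your argument closes and agrees with the paper's intended one.
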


\section{Remarks}\label{sec7}
\subsection{C-moves}
In this paper, we do not treat C-moves, but we can add \lq\lq C-moves" after \lq\lq Roseman moves" in Corollary \ref{thm:6-2}. 
Two $m$-charts on a 2-disk $E$ are {\it C-move equivalent} if they are related by a finite sequence of ambient isotopies of $E$ rel $\partial E$ and CI, CII, CIII-moves \cite{Kamada92, Kamada02}; see \cite{Kamada02} for the precise definition and the illustrations of the moves. 
Two 2-dimensional braids over $E$ of degree $m$ are equivalent if and only if $m$-charts are C-move equivalent; see \cite{Kamada92, Kamada92-2,Kamada96, Kamada02}.

\subsection{Torus-covering links}\label{sec7-2}
A {\it torus-covering link} \cite{N} is a surface link in the form of a 2-dimensional braid over a standard torus $T$, hence  presented by an $m$-chart on $T$. In \cite{N2}, we showed an explicit method to deform a torus-covering link by ambient isotopies to the form of a 2-dimensional braid over the standard 2-sphere. In the proof, we described the equivalent deformation of knotted surfaces by using motion picture method. We gave a rather long explanation to describe the deformation and illustrate equivalent surfaces, since we regarded each surface $F$ as a one-parameter family of slices of $F$ by 3-spaces $\mathbb{R}^3 \times \{t\}$ ($t \in \mathbb{R}$). Now that we have Roseman moves for surface diagrams with $m$-charts, we can provide a much simpler proof of this result \cite[Theorem 3.2]{N2}. Further, we remark that Proposition \ref{cor:6-4} in this paper is a generalization of \cite[Corollary 4.1]{N2}.

\section{Well-definedness of Roseman moves}\label{sec8}
This section is devoted to showing Theorem \ref{prop:7-1}. We denote Roseman moves as illustrated in Fig. \ref{fig:0417-03} by $R$--$(i)$ ($i=1,\ldots,7$), and  
we denote the Roseman moves as illustrated in Fig. \ref{fig:0417-04} by $\Omega$--$(i)$ ($i=1,\ldots,6$), as indicated in the figures. 
 
\begin{proof}[Proof of Theorem \ref{prop:7-1}]

It is known \cite{Yashiro} that the Roseman move $R$--(2) is constructed from $R$--(1) and $R$--(4). Hence it suffices to show that the 2-dimensional braids presented by $R$--$(i)$ for $i \neq 2$ and $\Omega$--$(j)$ ($i=1,\ldots,7, j=1,\ldots,6$) are equivalent. 
The cases for $R$--(1), $\Omega$--(1), and $R$--(6) are shown by Lemmas \ref{lem1}, \ref{lem3}, and \ref{lem2}, respectively. 

The other cases are shown as follows. 
By an ambient isotopy of $\mathbb{R}^3$, we can assume that the surface diagrams are in the form of 2-dimensional braids over a 2-disk $E$. Then, the presented surfaces are also in the form of 2-dimensional braids over $E$. It is known (see \cite[Theorem 1.34]{CKS}, \cite{Kamada02}) that for $m$-charts on $E$ with exactly one black vertices, or, for such $m$-charts with no black vertex, the presented 2-dimensional braids over $E$ are equivalent if the boundaries are the same. Hence we can see the well-definedness for these cases.  
\end{proof}

\begin{lemma}\label{lem1}
Empty $m$-charts on the diagrams indicated by Roseman move $R$--$(1)$ present equivalent 2-dimensional braids.  
\end{lemma}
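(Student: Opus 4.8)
Lemma \ref{lem1} asserts that the two surface diagrams appearing in Roseman move $R$--(1) — which introduces or removes a single branch point where a sheet folds back on itself along a short double point arc — carry equivalent 2-dimensional braids when both sides are equipped with the \emph{empty} $m$-chart.

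\medskip
\noindent\textbf{Proof proposal.}

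The plan is to localize the move and verify the equivalence by writing down explicit motion pictures of the two presented 2-dimensional braids and checking they describe isotopic surfaces over a 2-disk. First I would choose coordinates so that the local model of $R$--(1) is a 2-dimensional braid over a 2-disk $E$: on one side the diagram is a disk $E$ with a single short double point curve terminating at one branch point (say a positive branch point, the negative case being symmetric), and on the other side the diagram is the trivial disk $E$ with no singularity. By Definition (the empty-$m$-chart construction), over the nonsingular disk the presented surface is $m$ parallel copies of $E$, and over the side with the branch point it is the surface glued from the pieces of type (2) along the double point arc and type (3-1) at the branch point, namely the surface with motion picture $(\Delta')^{-1}\Delta^{-1}\Theta \dot{\to} (\Delta')^{-1}\Delta^{-1}\Delta'\Delta \to e_{2m}$ near the branch point, extended by the product piece $(\Delta')^{-1}\Delta^{-1}\Theta \times I$ along the rest of the arc, and capped over the surrounding region by parallel copies. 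Both presented surfaces are 2-dimensional braids over $E$ with trivial boundary braid and with at most one branch point; so the strategy is to invoke the uniqueness result already quoted in the paper — for 2-dimensional braids over a 2-disk of the same degree and with at most one branch point (in fact here: the branch-point side has exactly the branch points forced by the two type-(3-1) caps that appear, which I must count carefully), equivalence is determined by the boundary (see \cite[Theorem 1.34]{CKS}, \cite{Rudolph}, \cite{Kamada02}).

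The key steps, in order, are: (i) fix the standard local picture of $R$--(1) and identify the relevant compact region of $\mathbb{R}^3$ as $I\times I\times E$ so that on each side the presented surface sits in $B^2\times E$ and is a 2-dimensional braid over $E$; (ii) read off, from the empty-chart rules around regular points, double point curves, and branch points, the motion picture of each presented surface, being careful that the construction over the two sides agrees on the common boundary of the local region (this is where the checkerboard-coloring / even-intersection bookkeeping from the proof of Theorem \ref{thm:5-6} guarantees the boundary braid is the same and in fact is a product of full twists, hence trivial after the normalization by $(\Delta')^{-1}\Delta^{-1}$); (iii) verify that each side, as a 2-dimensional braid over $E$, has at most the allowed number of branch points (ideally zero, or exactly one, as needed for the cited uniqueness theorem) — here I expect the branch-point side of $R$--(1) to produce exactly one branch point of the presented surface after the cancellations in the motion picture, matching the no-branch-point trivial side up to a single cancelling pair, which one removes by an isotopy of braided surfaces; (iv) conclude by the uniqueness theorem that the two presented surfaces are equivalent as 2-dimensional braids over $E$, hence, after regluing the trivially-extended collar, equivalent as 2-dimensional braids over $F$ in the sense of Definition \ref{def:2-braid}.

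I expect the main obstacle to be step (ii)–(iii): the honest bookkeeping of the motion picture of the presented surface across the branch point, i.e. checking that the hyperbolic transformations along the $m$ bands labelled $\sigma_m$ together with the $\Delta,\Delta'$ prefactors really do collapse to the trivial braided surface with exactly the branch-point count predicted, and that nothing extra (a stray saddle or an extra branch point) is introduced along the collar where the two local models are matched. A subtlety worth isolating is orientation: the type-(3-1) versus type-(3-2) caps are glued according to the orientation of the sheets, so I must confirm that the local picture of $R$--(1) forces the \emph{positive} branch-point cap (or, symmetrically, treat the negative case by reversing the motion picture), and that the surrounding parallel-copy region is compatibly oriented. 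Once the two motion pictures are in hand and the branch-point/saddle count matches, the actual equivalence is immediate from the cited uniqueness of braided surfaces over a disk with prescribed boundary, so no genuinely new argument beyond careful diagram-chasing is required.
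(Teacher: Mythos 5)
Your overall set-up (localize $R$--(1) over a disk, read off the motion pictures from the empty-chart local models (1), (2), (3-1)/(3-2)) is fine, but the step the whole argument rests on --- reducing to the uniqueness theorem for braided surfaces over a 2-disk with prescribed boundary --- does not go through. That theorem, as quoted in the paper and in \cite[Theorem 1.34]{CKS}, \cite{Rudolph}, applies only to braided surfaces with \emph{no} branch points, or with exactly one. The surface presented by the empty $m$-chart over the nontrivial side of $R$--(1) has $m$ branch points for \emph{each} branch point of the companion diagram: the cap (3-1) is a hyperbolic transformation along $m$ bands (the $m$ letters $\sigma_m$ in $\Theta$), and each such band surgery, resolving a single crossing to the identity, is a branch point of the presented braided surface. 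Since the local diagram in $R$--(1) carries two branch points, the presented surface has $2m$ branch points, so your hoped-for count in step (iii) ("exactly one branch point after the cancellations") is wrong for every $m$, and the cited uniqueness theorem simply does not apply. Your fallback --- that the branch points occur in "a single cancelling pair, which one removes by an isotopy of braided surfaces" --- is not a citation but precisely the assertion that has to be proved; cancelling branch points of a braided surface by an ambient isotopy (as opposed to a C-move on charts, which the paper deliberately does not use here) is the entire content of the lemma. This is exactly why the paper separates $R$--(1) (together with $\Omega$--(1) and $R$--(6)) from "the other cases" in the proof of Theorem \ref{prop:7-1}, which \emph{are} handled by the uniqueness theorem.

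What the paper actually does is construct the equivalence by hand: starting from the motion picture of the presented surface read off from the local models, it performs a sequence of ambient-isotopy deformations of the motion picture (Figs.~\ref{fig:0501-01}--\ref{fig:0501-03}) until the surface is displayed as the trace of a single ambient isotopy $\{g_u\}$ of $\mathbb{R}^3$ run forward on $[-4,0]$ and backward on $[0,4]$, with the branch points sitting at $t=\pm 2$; such a surface is visibly ambient isotopic in $\mathbb{R}^4$ to the trivial 2-dimensional braid. (The same isotopy $\{g_u\}$ is then reused to prove Lemma \ref{lem3}.) To repair your proof you would need to replace step (iv) by an explicit isotopy of this kind, or else prove a cancellation statement for the $2m$ branch points that the quoted uniqueness theorem does not provide.
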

\begin{figure}[ht]
 \includegraphics*{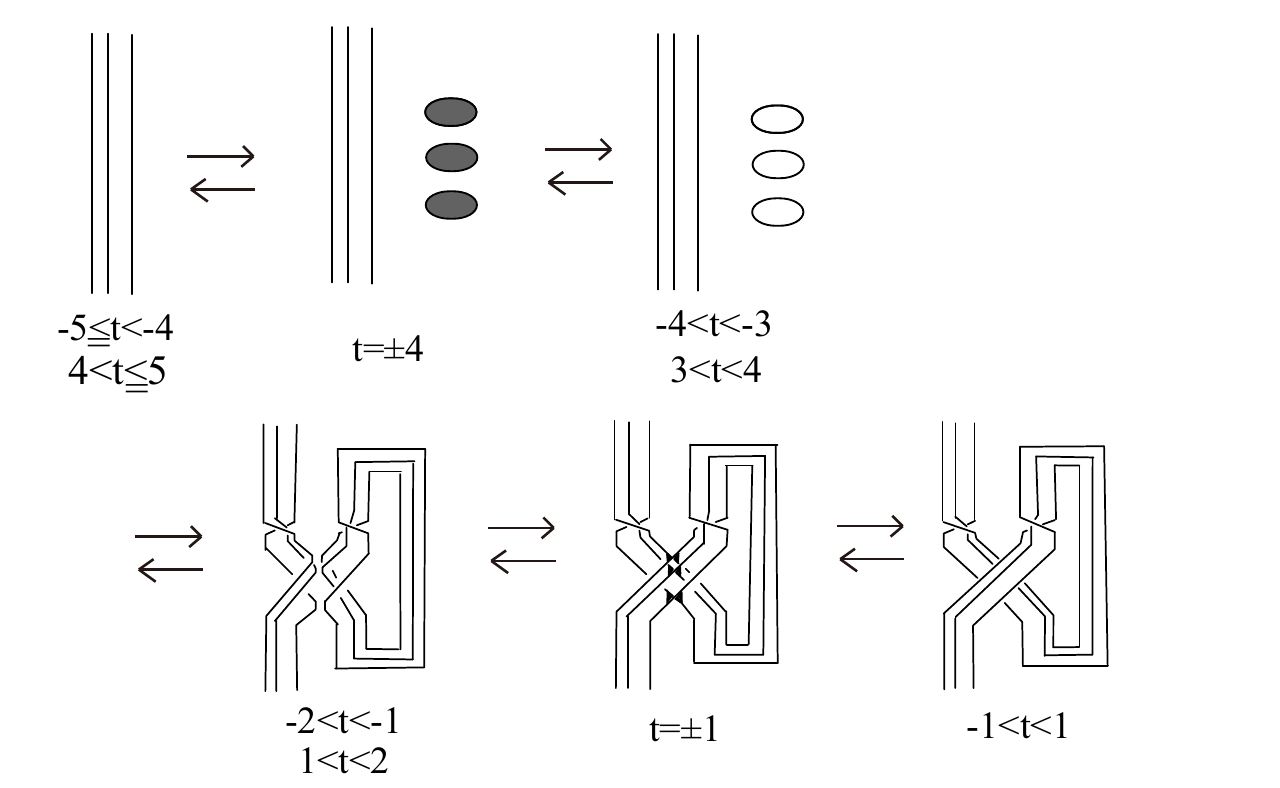}
\caption{The 2-dimensional braid presented by $R$--(1).}
\label{fig:0501-01}
 \end{figure}
\begin{figure}[ht]
 \includegraphics*{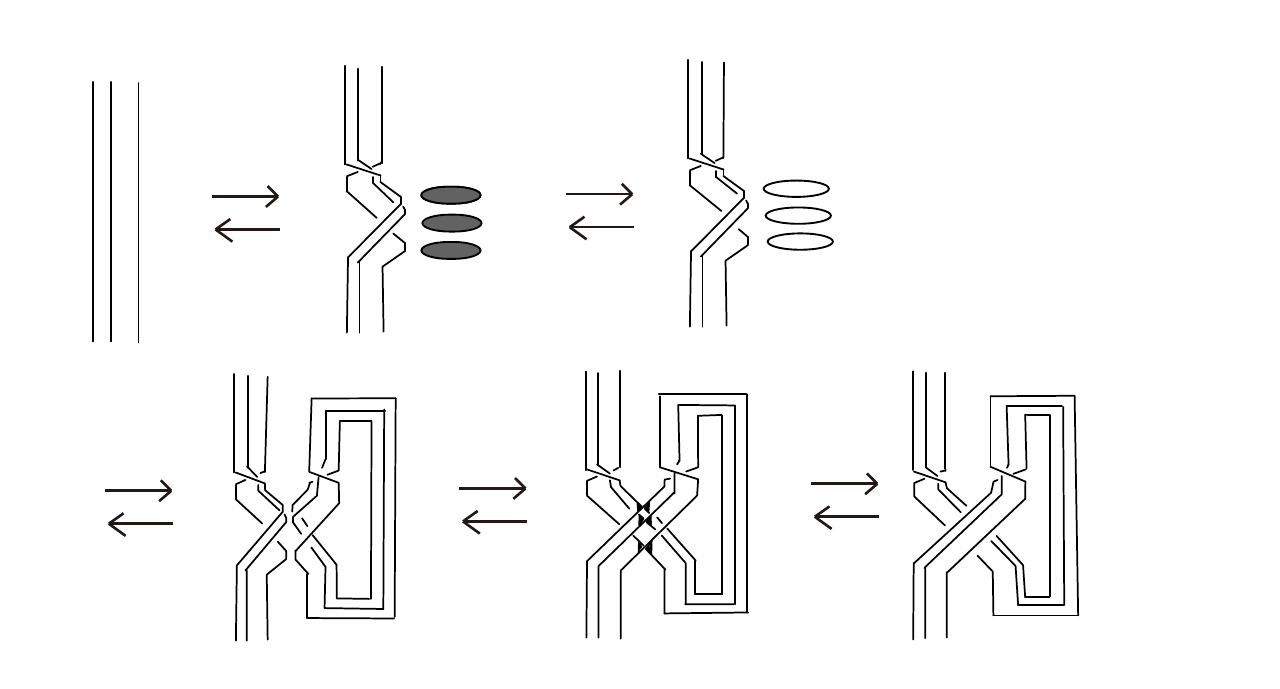}
\caption{The 2-dimensional braid presented by $R$--(1).}
\label{fig:0501-02}
 \end{figure}
\begin{figure}[ht]
 \includegraphics*{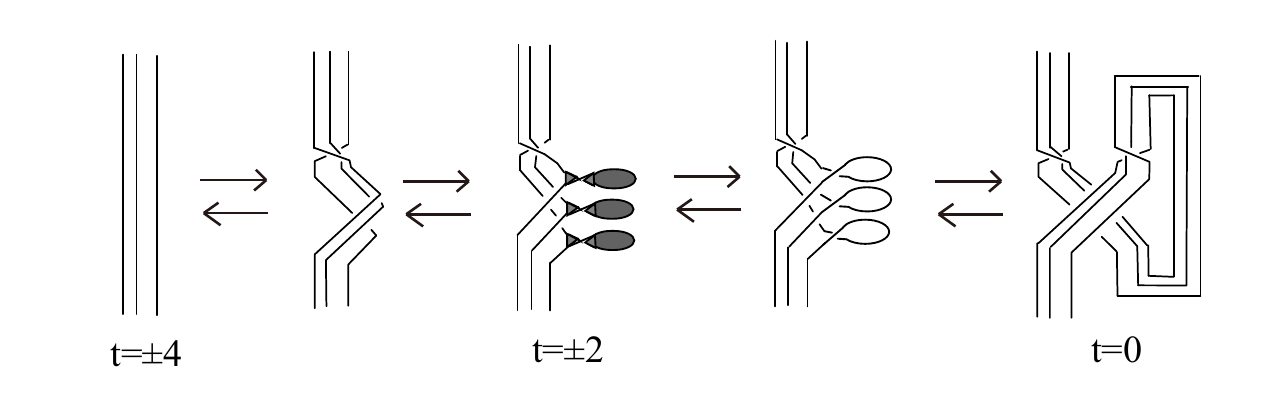}
\caption{The 2-dimensional braid presented by $R$--(1), 
where we assume that the disks at $t=\pm 2$ are branch points.}
\label{fig:0501-03}
 \end{figure}
\begin{proof}
We give a sketch of a proof by illustrating figures. Details are left to the reader; see \cite{N2} for a similar argument. 
By definition of the 2-dimensional braid presented by an empty $m$-chart, the 2-dimensional braid presented by an empty $m$-chart on the diagram indicated by the upper figure of $R$--$(1)$ in Fig. \ref{fig:0417-03} has the motion picture as in Fig. \ref{fig:0501-01}. (There seem to be two types, but one is transformed to the other by rotating the diagram around the vertical axis by $\pi$.) By ambient isotopies, we can deform the surface to the form presented by the motion picture as in Fig. \ref{fig:0501-02}, and then to the form as in Fig. \ref{fig:0501-03}. 
For the resulting surface $S$, we can assume that the disks at $t=\pm 2$ are branch points. We denote each picture of the motion picture of $S$ by $S_t$, and we assume that $S_t$ is in a 3-space $\mathbb{R}^3 \times \{t\}$.  
Figure \ref{fig:0501-03} indicates the existence of an ambient isotopy $\{g_u\}_{u \in [0,1]}$ of $\mathbb{R}^3$ which carries $S_{\pm 4}$ to $S_0$ such that $S\cap (\mathbb{R}^3 \times [-4,0])$ (resp. $S\cap (\mathbb{R}^3 \times [0,4])$) is presented as the orbit of $g_u(S_{-4})$ (resp. $g_{1-u}(S_{4})$), where $u \in [0,1]$. 
Hence we can construct an ambient isotopy of $\mathbb{R}^4$ which carries $S$ to the trivial 2-dimensional braid, which is presented by an empty $m$-chart on the diagram indicated by the lower figure of $R$--(1). 
Thus $R$--(1) is well-defined.
\end{proof}

\begin{lemma}\label{lem3}
 The $m$-charts on the diagrams indicated by Roseman move $\Omega$--$(1)$ present equivalent 2-dimensional braids.  
\end{lemma}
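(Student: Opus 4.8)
The plan is to argue locally. The move $\Omega$--$(1)$ changes the surface diagram $D$ and the $m$-chart $\Gamma$ only inside a small ball and agrees with the identity outside it, so by the definition of equivalence (Definition~\ref{def:2-braid}) it suffices to produce an ambient isotopy of $\mathbb{R}^4$, supported in a small neighbourhood of the affected part of the surface link, carrying the 2-dimensional braid presented by one side of $\Omega$--$(1)$ to the one presented by the other. First I would write down, for each of the two diagrams, the motion picture of the presented 2-dimensional braid, combining the empty $m$-chart construction along the relevant part of $\mathrm{Sing}(D)$ — a double point curve, contributing the block $(\Delta^\prime)^{-1}\Delta^{-1}\Theta$ as in Fig.~\ref{fig:0429-01} — with the contribution of the chart edge crossing it, namely the generators $\sigma_i^\epsilon$, $\sigma_{m-i}^\epsilon$, $\sigma_{2m-i}^\epsilon$ prescribed by Definition~\ref{def:20131211}(3).

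The two sides present 2-dimensional braids over the local $2$-disk whose $m$-charts carry \emph{different} data on the boundary — the crossing edge lies on different sides of the double point curve — so one cannot conclude equivalence from the ``equal boundary $\Rightarrow$ equivalent'' criterion that disposes of the remaining Roseman moves; this is precisely why $\Omega$--$(1)$ needs its own argument. Instead I would build the equivalence explicitly, in the manner of the proof of Lemma~\ref{lem1} and of the deformation carried out in \cite{N2}: slice by slice, deform one motion picture into the other by ambient isotopies of $\mathbb{R}^3$ (dragging the relevant disks and bands) together with braid isotopic transformations, tracking how the edge generator is routed past the blocks $\Delta$, $\Delta^\prime$, $\Theta$. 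The identities doing the work are the braid relations in the $2m$-strand braid group, $\sigma_a\sigma_b=\sigma_b\sigma_a$ for $|a-b|\geq 2$ and $\sigma_a\sigma_{a+1}\sigma_a=\sigma_{a+1}\sigma_a\sigma_{a+1}$, together with the fact that $\Delta$ and $\Delta^\prime$ are half-twist type braids, so that conjugation by them acts on the generators by an index reflection — this being exactly the mechanism behind the passages $\sigma_i^\epsilon\to\sigma_{m-i}^\epsilon$ and $\sigma_{m-i}^\epsilon\to\sigma_{2m-i}^\epsilon$ in Definition~\ref{def:20131211}(3). Concatenating these slice-wise deformations yields the desired ambient isotopy of $\mathbb{R}^4$; one also checks that the over/under information agrees along the way, which is a verification and not a choice, since it is determined by the two constructions.

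The step I expect to be the main obstacle is this slice-wise bookkeeping. Even the empty $m$-chart along a single double point curve of $D$ already produces $m(2m-1)$ double point curves, and one must follow the chart edge — and its transforms $\sigma_{m-i}^\epsilon$, $\sigma_{2m-i}^\epsilon$ — past all of them while performing the braid-relation manipulations and respecting the over/under data. The conceptual content is small; it is the $m$-chart analogue of the elementary fact that a strand may be isotoped through a product region of a braided surface. The real work lies in organizing the motion-picture deformation cleanly, preferably as an explicit sequence of figures as in the proof of Lemma~\ref{lem1}.
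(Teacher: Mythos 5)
Your strategy --- writing out the two motion pictures and deforming one into the other slice by slice using braid relations and the half-twist conjugation behind Definition~\ref{def:20131211}(3) --- is a genuinely different route from the paper's, and in principle it could be made to work, but as written it is a plan rather than a proof: the entire content is deferred to the ``bookkeeping'' you acknowledge at the end, and that bookkeeping is exactly where the difficulty lives (the local diagram for $\Omega$--$(1)$ is the nontrivial side of $R$--$(1)$, so it contains two branch points of $D$, each of which contributes $m$ branch points to the presented surface; the deformation is not just a routing of one generator past a product block). The paper avoids all of this with one observation: the ambient isotopy $\{g_u\}$ already constructed in the proof of Lemma~\ref{lem1} to collapse the $R$--$(1)$ bubble preserves the framing of $N(F)$ determined by the empty $m$-chart (Remark~\ref{rem:5-3}) and carries the tube $N(l)$ over the chart edge $l$ in the upper figure to the tube $N(l^\prime)$ over $l^\prime$ in the lower figure. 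Since the presented 2-dimensional braid is determined by the framing together with the braiding carried along the edge, this single isotopy identifies the two presented braids, with no braid-relation computation at all. Reusing Lemma~\ref{lem1} in this way is the key idea you are missing.

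Two further points. First, your diagnosis of why the generic ``same boundary $\Rightarrow$ equivalent'' argument fails is off: in both figures of $\Omega$--$(1)$ the chart data on the boundary of the local ball agree (that is what makes it a local move); the criterion fails because the presented surface over this region has many branch points (black vertices), whereas the criterion quoted in the proof of Theorem~\ref{prop:7-1} requires at most one. Second, an arbitrary ambient isotopy of $\mathbb{R}^4$ carrying one presented surface to the other only gives equivalence as surface links; equivalence as 2-dimensional braids in the sense of Definition~\ref{def:2-braid} requires compatibility with the associated embedding, i.e.\ the isotopy must respect the framing of $N(F)$. The paper checks this explicitly via Remark~\ref{rem:5-3}; your slice-wise deformation would need the same verification, which the proposal does not mention.
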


\begin{proof}
We denote by $D$ the surface diagram indicated by $\Omega$--(1), and we denote by $F$ the surface of the companion in $\mathbb{R}^4$ presented by $D$. 
Let us denote by $N(l)$ (resp. $N(l^\prime)$) the preimage by the projection $N(F) \to F$ of the path $l$ (resp. $l^\prime$) in $F$ indicated by the edge of the $m$-chart in the upper figure (resp. lower figure) of $\Omega$--$(1)$ in Fig. \ref{fig:0417-04}. 
The ambient isotopy $\{g_u\}$ in the proof of Lemma \ref{lem1} indicates the existence of an ambient isotopy of $\mathbb{R}^4$ which fixes the framing of $N(F)$ determined by an empty $m$-chart on $D$ (see Remark \ref{rem:5-3}) and carries $N(l)$ to $N(l^\prime)$.  
By this ambient isotopy, we can relate the presented 2-dimensional braids, and hence, they are equivalent.  
\end{proof} 

\begin{lemma}\label{lem2}
 Empty $m$-charts on the diagrams indicated by Roseman move $R$--$(6)$ present equivalent 2-dimensional braids.  
\end{lemma}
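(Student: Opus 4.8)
The plan is to argue as in the proof of Lemma \ref{lem1}, by a motion-picture analysis, since $R$--$(6)$ --- like $R$--$(1)$ --- involves branch points in a configuration that cannot be localised as a $2$-dimensional braid over a $2$-disk with at most one black vertex, and so is not covered by the generic argument at the end of the proof of Theorem \ref{prop:7-1}. First I would write down an explicit motion picture of the $2$-dimensional braid presented by an empty $m$-chart on the diagram indicated by the upper figure of $R$--$(6)$ in Fig. \ref{fig:0417-03}, assembled from the local models of the construction: parallel copies of a $2$-disk over a regular point, the product of the $2m$-braid $(\Delta^\prime)^{-1}\cdot\Delta^{-1}\cdot\Theta$ with an interval over a double point curve, the hyperbolic transformation
\[
 (\Delta^\prime)^{-1}\cdot\Delta^{-1}\cdot\Theta \;\dot{\to}\; (\Delta^\prime)^{-1}\cdot\Delta^{-1}\cdot\Delta^\prime\cdot\Delta \;\to\; e_{2m}
\]
and its inverse at positive and negative branch points, and the $2m$ parallel disks capping a neighbourhood of a triple point. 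I would then do the same for the lower figure of $R$--$(6)$.

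Next I would simplify both motion pictures to a common form by a finite sequence of braid isotopic transformations in the slices and ambient isotopies of $\mathbb{R}^3$, in the manner of the passage from Fig. \ref{fig:0501-01} to Fig. \ref{fig:0501-03}: using the defining identities relating $\Delta$, $\Delta^\prime$, $\Theta$ and $e_{2m}$ and cancelling the attaching bands of the hyperbolic transformations. Once the two local surfaces agree on their boundaries and each is a $2$-dimensional braid over a $2$-disk $E$ with at most one black vertex, I would invoke the uniqueness statement already used in the proof of Theorem \ref{prop:7-1} (see \cite[Theorem 1.34]{CKS}, \cite{Rudolph, Kamada02}) to conclude that they are equivalent rel $\partial E$.

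Finally, as in the proofs of Lemmas \ref{lem1} and \ref{lem3}, I would assemble the slicewise ambient isotopies of $\mathbb{R}^3$ into an ambient isotopy $\{g_u\}_{u\in[0,1]}$ of $\mathbb{R}^4$ which is fibre-preserving with respect to $N(F)=B^2\times F$, fixes $p^{-1}(x_0)$, and satisfies $g_1\circ f=f^\prime$ for the associated embeddings; since the presented $2$-dimensional braids are trivial --- hence equal --- away from the local part, by the first part of the proof of Theorem \ref{thm:5-6}, this shows that the two $2$-dimensional braids are equivalent, so $R$--$(6)$ is well-defined.

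The hard part will be the explicit motion-picture bookkeeping --- tracking the $2m$-braids, and in particular the attaching bands of the hyperbolic transformations, through the deformation of the branch-point configuration --- together with the verification that the slicewise simplifications can be chosen compatibly so as to fit into a single ambient isotopy of $\mathbb{R}^4$ preserving the framing of $N(F)$ (see Remark \ref{rem:5-3}). One must also check that the two surface diagrams related by $R$--$(6)$ carry compatible orientations, since orientations of the diagram enter the construction of the empty $m$-chart through the capping of triple-point neighbourhoods; granting these points the argument is a long but routine diagram chase, and the details may be left to the reader as in Lemma \ref{lem1}.
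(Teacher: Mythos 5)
There is a genuine gap in your second step. You plan to simplify both motion pictures until ``each is a $2$-dimensional braid over a $2$-disk $E$ with at most one black vertex'' and then invoke the uniqueness statement from the proof of Theorem \ref{prop:7-1}. This cannot be arranged: the local model of an empty $m$-chart at a branch point of $D$ is the hyperbolic transformation along the $m$ bands corresponding to the $m$ letters $\sigma_m$ in $\Theta$, so the presented surface acquires $m$ branch points, i.e.\ $m$ black vertices in any chart over a disk, and these survive on both sides of $R$--(6). So the cited uniqueness result (which requires zero or one black vertex) does not apply, and your reduction collapses back to exactly the situation you correctly identified at the outset as not covered by the generic argument. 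What remains of your plan is then a bare-hands motion-picture construction of an ambient isotopy of $\mathbb{R}^4$ matching up all $m$ branch points while the companion branch point is pushed through the other sheet (creating the triple points of the lower figure); this is not the ``routine diagram chase'' you claim, and you give no mechanism for it.

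The paper's proof supplies the two missing ideas. First, the two sheets of the $R$--(6) configuration are disjoint in $\mathbb{R}^4$, so the presented $2$-dimensional braid is the \emph{split union} $S_1\sqcup S_2$ of the braids over the sheet $E_1$ without the branch point and the sheet $E_2$ carrying it, and it suffices to treat each sheet separately; when a sheet is picked up on its own, the former double point curve $l$ leaves behind parallel chart edges presenting an $m$-braid with a half twist (this is the restriction of $(\Delta^\prime)^{-1}\Delta^{-1}\Theta$ to one block of $m$ strands), normalized using invariance of the half twist under $\sigma_i\mapsto\sigma_{m-i}$. Second, $S_1\simeq S_1^\prime$ does follow from the no-black-vertex uniqueness statement, but $S_2\simeq S_2^\prime$ is obtained by observing that the induced charts on $E_2$ and $E_2^\prime$ have the same boundary and differ by finitely many applications of $\Omega$--(1), which is already established in Lemma \ref{lem3}. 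Your proposal contains neither the splitting nor the reduction to $\Omega$--(1); without them the argument as written does not go through.
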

\begin{figure}[ht]
\begin{center}
 \includegraphics*{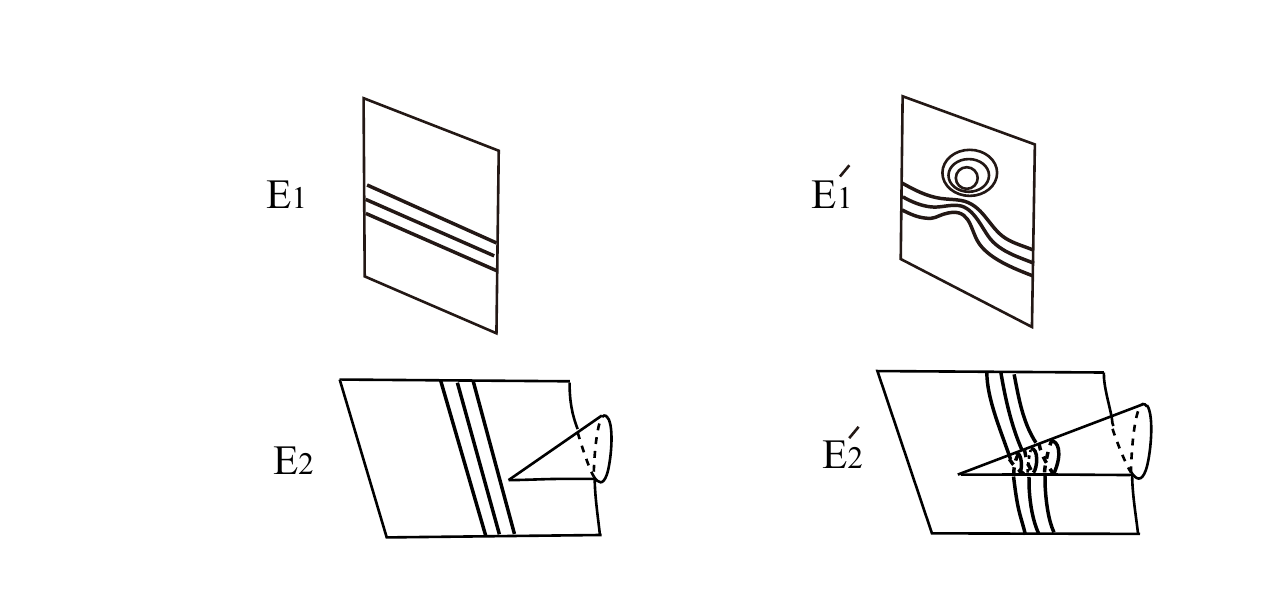}
\end{center}
\caption{Surface diagrams with $m$-charts. For simplicity, we omit the over/under information of sheets, and orientations and  labels of edges of $m$-charts. The parallel edges of $m$-charts present the product of an interval and an $m$-braid with a negative or positive half twist.}
\label{2013-1211-01}
 \end{figure}

\begin{proof}
 Let us denote by $E_1$ (resp. $E_2$) the surface diagram with an $m$-chart presenting a connected surface without the branch point (resp. with the branch point) in the upper figure of $R$--(6) in Fig. \ref{fig:0417-03}, and let us denote by $E_1^\prime$ (resp. $E_2^\prime$) the surface diagram with an $m$-chart in the lower figure which is deformed from $E_1$ (resp. $E_2$). 
Let us pick up $E_i$ (resp. $E_i^\prime$) $(i=1,2)$ separately; note that there appear edges of an $m$-chart around the curve $l$ (resp. $l^\prime$) which was the intersection of $E_1$ and $E_2$ (resp. $E_1^\prime$ and $E_2^\prime$). On $E_1$ and $E_2$, around $l$ which was a double point curve, there appear parallel edges of $m$-charts which present the product of an interval and an $m$-braid with a negative or positive half twist according to the over/under information of $E_1$ and $E_2$, and hence the $m$-charts are as in the left figure of Fig. \ref{2013-1211-01}. On $E_1^\prime$ and $E_2^\prime$, around a part of $l^\prime$ which was a double point curve, there appear parallel edges of $m$-charts similar to the case of $l$, and around a part of $l^\prime$ which was a triple point, the edges of the $m$-charts contain no black vertices.  
Since an $m$-braid with a half twist is preserved by the homomorphism which sends $\sigma_i$ to $\sigma_{m-i}$ ($i=1,\ldots,m-1$), 
we can assume that the $m$-charts are as in the right figure of Fig. \ref{2013-1211-01}. 

We denote by $S_i$ and $S_i^\prime$ $(i=1,2)$ the 2-dimensional braids presented by $E_i$ and $E_i^\prime$ with the $m$-charts, respectively. 
Since the 2-dimensional braid presented by the upper figure (resp. lower figure) of $R$--(6) are the split union of $S_1$ and $S_2$ ($S_1^\prime$ and $S_2^\prime$), it suffices to show that 
$S_i$ and $S_i^\prime$ are equivalent for $i=1,2$. 
Since $S_1$ and $S_1^\prime$ are 2-dimensional braids over a 2-disk with no branch points and the boundaries are the same, by the same reason as in the proof of Theorem \ref{prop:7-1}, $S_1$ and $S_1^\prime$ are equivalent. 
Since the boundaries of the $m$-charts on $E_2$ and $E_2^\prime$ are the same, one diagram with the $m$-chart is deformed to the other by applying $\Omega$--(1) several times, and it follows from Lemma \ref{lem3} that $S_2$ and $S_2^\prime$ are equivalent. Thus we have the required result.
\end{proof}

\section*{Acknowledgments}
The author would like to thank Professor Seiichi Kamada and the referee for their helpful comments.
The author was supported by JSPS Research Fellowships for Young Scientists ($24 \cdot 9014$), and iBMath through the fund for Platform for Dynamic Approaches to Living System from MEXT.

\end{document}